\documentclass[12pt,reqno]{amsart}

\textheight=21truecm
\textwidth=15truecm
\voffset=-1cm
\hoffset=-1cm

\usepackage{color}
\usepackage{amsmath, amsthm, amssymb}
\usepackage{amsfonts}
\usepackage[ansinew]{inputenc}
\usepackage[dvips]{epsfig}
\usepackage{graphicx}
\usepackage[english]{babel}
\usepackage{hyperref}
\theoremstyle{plain}
\newtheorem{thm}{Theorem}[section]
\newtheorem{thm*}{Theorem}

\newtheorem{lem}[thm]{Lemma}
\newtheorem{prop}[thm]{Proposition}
\newtheorem{claim*}[thm*]{Claim}

\theoremstyle{definition}

\theoremstyle{remark}
\newtheorem{rem}[thm]{Remark}

\numberwithin{equation}{section}

\newcommand{\average}{{\mathchoice {\kern1ex\vcenter{\hrule height.4pt
width 6pt depth0pt} \kern-9.7pt} {\kern1ex\vcenter{\hrule
height.4pt width 4.3pt depth0pt} \kern-7pt} {} {} }}
\newcommand{\ave}{\average\int}
\def\R{\mathbb{R}}
\newcommand{\I}{{\rm I}}

\begin{document}

\title[Fully nonlinear parabolic equations with rough kernels]{Regularity for fully nonlinear nonlocal parabolic equations with rough kernels}

\author{Joaquim Serra}
\address{Universitat Polit\`ecnica de Catalunya, Departament de Matem\`{a}tica  Aplicada I, Diagonal 647, 08028 Barcelona, Spain}

\maketitle

\begin{abstract}
We prove space and time regularity for solutions of fully nonlinear parabolic integro-differential equations with rough kernels.
We consider parabolic equations $u_t = \I u$, where $\I$ is translation invariant
and elliptic with respect to the class $\mathcal L_0(\sigma)$ of Caffarelli and Silvestre,  $\sigma\in(0,2)$ being the order of $\I$.
We prove that if $u$ is a viscosity solution  in $B_1 \times (-1,0]$  which is merely bounded in $\R^n \times (-1,0]$, then $u$ is $C^\beta$
in space  and $C^{\beta/\sigma}$ in time in $\overline{B_{1/2}} \times [-1/2,0]$, for all  $\beta< \min\{\sigma, 1+\alpha\}$, where $\alpha>0$.
Our proof combines a Liouville type theorem ---relaying on the nonlocal parabolic $C^\alpha$ estimate of
Chang and D\'avila---  and a blow up and compactness argument.
\end{abstract}

\section{Introduction}
In \cite{CS}, Caffarelli and Silvestre introduced the ellipticity class $\mathcal L_0=\mathcal L_0(\sigma)$, with order $\sigma\in (0,2)$. The class $\mathcal L_0$ contains all linear operators $L$ of the form
\[ Lu(x) =  \int_{\R^n} \left(\frac{u(x+y)+u(x-y)}{2} -u(x)\right) K(y)\,dy,\]
where the kernels $K(y)$ satisfy the ellipticity bounds
\[0< \lambda\frac{2-\sigma}{|y|^{n+\sigma}} \le K(y)\le \Lambda\frac{2-\sigma}{|y|^{n+\sigma}}.\]
This includes kernels that may be very oscillating and irregular. That is why
the words {\em rough kernels} are sometimes used to refer to $\mathcal L_0$.
The extremal operators $M_\sigma^+$ and $M_\sigma^-$ for $\mathcal L_0$ are
\[  M_\sigma^+ u(x) = \sup_{L\in \mathcal L_0} Lu(x) \quad \mbox{ and }\quad  M_\sigma^- u(x) = \inf_{L\in \mathcal L_0} Lu(x).\]

If $u \in L^\infty(\R^n)$ satisfies the two viscosity inequalities $M_\sigma^+u\ge 0$ and $M_\sigma^-u \le 0$ in $B_1$, then $u$ belongs to $C^{\alpha}(\overline{B_{1/2}})$. More precisely, one has the estimate
\begin{equation}\label{Calphaelliptic}
\|u\|_{C^\alpha(B_{1/2})}\le C\|u\|_{L^\infty(\R^n)}.
\end{equation}
This estimate, with constants that remain bounded as the $\sigma\nearrow2$, is one of the main results in \cite{CS}.

For second order equations ($\sigma=2$) the analogous of \eqref{Calphaelliptic} is the classical estimate of Krylov and Safonov, and differs from \eqref{Calphaelliptic} only from the fact that it has
$\|u\|_{L^\infty(B_1)}$ instead of $\|u\|_{L^\infty(\R^n)}$ on the right hand side.
This apparently harmless difference comes from the fact that elliptic equations of order $\sigma<2$ are nonlocal.
By analogy with second order equations, from \eqref{Calphaelliptic} one expects to obtain $C^{1,\alpha}$ interior regularity of solutions to translation invariant elliptic equations $\I u =0$ in $B_1$.
When $\sigma=2$, this is done by applying iteratively the estimate \eqref{Calphaelliptic} to incremental quotients of $u$, improving at each step by $\alpha$
the H\"older exponent in a smaller ball (see \cite{CC}).
However, in the case $\sigma<2$ the same iteration does not work since, right after the first step, the $L^\infty$ norm of the incremental quotient of $u$ is only bounded in $B_{1/2}$, and not in the whole $\R^n$.

The previous difficulty is very related to
the fact that the operator will ``see'' possible distant high frequency oscillations in the exterior Dirichlet datum.
In \cite{CS}, this issue is bypassed by restricting the ellipticity class, i.e., introducing a new class $\mathcal L_1\subset \mathcal L_0$ of operators with $C^1$
kernels (away from the origin). The additional regularity of the kernels has the effect of averaging distant high frequency oscillations, balancing out its influence.
This is done with an integration by parts argument.
Hence, the  $C^{1+\alpha}$ estimates in \cite{CS} are ``only'' proved for elliptic equations with respect to $\mathcal L_1$ (instead of $\mathcal L_0$).

Very recently, Dennis Kriventsov \cite{K} succeeded in proving the same $C^{1+\alpha}$ estimates for elliptic equations of order $\sigma>1$ with rough kernels, that is, for $\mathcal L_0$.
The proof in \cite{K} is quite involved and combines fine new estimates with a compactness argument.
In \cite{K} the same methods are used to obtain other interesting applications, including nearly sharp Schauder type estimates for linear, non translation invariant, nonlocal elliptic equations.

Here, we extend the main result in \cite{K} in two ways, providing in addition a new proof of it. First, we pass from elliptic to parabolic equations. Second, we allow also $\sigma\le1$, proving in this case 
$C^{\sigma-\epsilon}$ regularity in space and $C^{1-\epsilon}$ in time (for all $\epsilon>0$) for solutions to nonlocal translation invariant parabolic equations with rough kernels. Our proof follows a new method, different from that in \cite{K}. As explained later in this introduction, our strategy is to prove first a Liouville type theorem for global solutions,  and  to deduce later the interior estimates from this Liouville theorem, using a blow up and compactness argument.
That a regularity estimate and a Liouville theorem are in some way equivalent is an old principle in PDEs, but here it turns out to be very useful to bypass the difficulty iterating the ``nonlocal'' estimate \eqref{Calphaelliptic}.

Therefore, a main interest of this paper lies precisely on the method that we introduce here. It is very flexible and can be useful in different contexts with nonlocal equations. For instance, the method can
be used to study equations which are nonlocal also in time, and also to analyze boundary regularity for nonlocal equations (see Remark \ref{appl}).

To have a local $C^{1+\alpha}$ estimate for solutions that are merely bounded in $\R^n$, it is necessary that the order of the equation be greater than one.
Indeed, for nonlocal equations of order $\sigma$ with rough kernels there is no hope to prove a local H\"older estimate of order greater than $\sigma$ for solutions that are merely bounded in $\R^n$.
The reason being that influence of the distant oscillations is too strong. Counterexamples can be easily constructed even for linear equations. That is why the condition $\sigma>1$ is necessary for the $C^{1,\alpha}$ estimates of Kriventsov \cite{K}. Also, this is why we prove $C^\beta$ estimates in space only for $\beta<\sigma$.

As explained above, the difficulty of nonlocal equations with rough kernels, with respect to local ones, is that the estimate  \eqref{Calphaelliptic} is not immediately useful to prove higher order H\"older regularity
for solutions of $\I u =0$ in $B_1$. Recall that the classical iteration fails because, after the first step, the $L^\infty$ norm of the incremental quotient of order $\alpha$ is only controlled in $B_{1/2}$, and not in the whole $\R^n$.
The idea in our approach is that the iteration does work if one considers a solution in the whole space.
If we have a global solution $u$, then we can apply  \eqref{Calphaelliptic}  at every scale and deduce that $u$ is $C^\alpha$ in all space. Then, we consider the incremental quotients of order $\alpha$ of $u$, which we control in the whole $\R^n$,  and we prove that $u$ is $C^{2\alpha}$.  And so on. When this is done with estimates, taking into account the growth at infinity of the function $u$ and the scaling of the estimates, we obtains a Liouville theorem.
Using it, we deduce the higher order interior regularity of $u$ directly, using a blow up argument and compactness argument.
In order to have  compactness of sequences of viscosity solutions we only need the  $C^\alpha$ estimate  \eqref{Calphaelliptic}.

For local translation invariant elliptic equations like $F(D^2u)=0$  in $B_1$ it would be a unnecessary complication to first prove the Liouville theorem
and then obtain the interior estimate by the blow up and compactness argument in this paper. Indeed,  as said above, the iteration already works in the bounded domain $B_1$.
Nevertheless, it is worth noting that equations of the type $F(D^2u,Du,x)=0$, with continuous dependence on $x$,  become $\tilde F(D^2u)=0$ after blow up at some point.
By this reason, one can see that the second order Liouville theorem and the blow up method  provide a $C^{1,\alpha}$
bound for solutions to $F(D^2u,Du,x)=0$ in $B_1$. However, this approach gives nothing new with respect to classical perturbative methods (as in \cite{CC}).

For nonlocal equations, we could also have considered non translation invariant equations ---with continuous dependence on $x$---,
and having also lower order terms. This is because in our argument we blow up the equation. Translation and scale invariances are only needed in the limit equation (after blow up), to which we apply the Liouville theorem.
And, in a typical situation, when one blows up a non translation invariant equations with lower order terms one gets a translation invariant equation with no lower order terms.
Hence, in the appropriate setting, we could certainly extend our results to these equations.
In this paper, however, we do not include this since we are not interested in pushing the method to its limits, but rather in giving a clear example of its use.

In the following remark we give two examples of different contexts in which the method of this paper is useful.
\begin{rem}\label{appl}
{\em Nonlocal dependence also on time.}
Let us  point out that it is not essential  to our argument that that the equation is local in time. Hence, the same ideas  could be useful when considering nonlinear parabolic-like equations
which have a nonlocal dependence on the past time. For instance,  it could be useful when studying the nonlinear versions of the generalized master equations \cite{CSMasters}.

{\em Boundary regularity.} A boundary version of the method in the present paper turns out to be a powerful tool in the study of the boundary regularity for fully nonlinear integro-differential elliptic equations;
this is done in the work of Ros-Oton and the author \cite{RS}. In this case, the Liouville theorem to be used is for solutions in a half space $\{x_n>0\}$,
which clearly corresponds to the blow up of a smooth domain at a given boundary point. Interestingly, the possible solutions in this Liouville theorem are not planes,
but instead they are of the type $c (x_n)_+^s$, for some constant $c$.
Once one has this ``boundary'' Liouville theorem ---its proof is more involved than that of the ``interior'' one in this paper---, then the blow up and compactness argument in this paper can be adapted  to obtain fine boundary regularity results.
\end{rem}

\section{Main result}

The basic parabolic $C^{\alpha}$ estimate on which all our argument relies has been obtained by Chang and D\'avila \cite{ChD} ---this is the parabolic version of \eqref{Calphaelliptic} and we state it below.

In order that the statements of the results naturally include their classical second order versions, it is convenient to define the ellipticity class $\mathcal L_0(2)$, as the set of second order linear operators
\[Lu(x) = a_{ij}\partial_{ij} u(x)\]
with $(a_{ij})$ satisfying
\[ 0 < c_n \lambda {\rm Id} \le (a_{ij}) \le c_n \Lambda  {\rm Id} ,.\]
The constant $c_n$ is a appropriately chosen so that the operators in $\mathcal L_0(\sigma)$ converge to operators in $\mathcal L_0(2)$ (when applied to bounded smooth functions).

Throughout the paper, $\omega_{\sigma_0}$ denotes the weight
\[ \omega_{\sigma_0}(x) = \frac{2-\sigma_0}{1+|x|^{n+\sigma_0}}.\]

\begin{thm}[Regularity in space from {\cite[Theorem 5.1]{ChD}}]\label{holder}
Let $\sigma_0\in(0,2]$ and $\sigma\in[\sigma_0,2]$.
Let $u\in C\bigl(\overline{B_1} \times[-1,0]\bigr)$ with $\sup_{t\in[-1,0]}  \int_{\R^n} u(x,t) \omega_{\sigma_0}(x)\,dx <\infty$  satisfy the following two inequalities in the viscosity sense
\[ u_t -M_\sigma^+ u \le C_0 \quad \mbox{and}\quad u_t - M_\sigma^- u \ge -C_0\quad \mbox{in } B_1 \times(-1,0].\]
Then, for some $\alpha\in(0,1)$ and $C>0$, depending only on $\sigma_0$, ellipticity constants, and dimension, we have
\[\sup_{t\in[-1/2,0]} \bigl[u(\,\cdot\,,t)\bigr]_{C^\alpha(B_{1/2})} \le C\left( \|u\|_{L^\infty(B_1\times(-1,0])} + \sup_{t\in [-1,0]} \| u(\,\cdot\,,t)\|_{L^1(\R^n, \omega_{\sigma_0})} +C_0\right).\]
\end{thm}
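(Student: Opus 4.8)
The plan is to establish, by the nonlocal parabolic Krylov--Safonov method, the pointwise oscillation decay
\[ \operatorname*{osc}_{B_r\times(-r^\sigma,0]} u \;\le\; C\,r^{\alpha}\Bigl( \|u\|_{L^\infty(B_1\times(-1,0])} + \sup_{t\in[-1,0]}\|u(\cdot,t)\|_{L^1(\R^n,\omega_{\sigma_0})} + C_0\Bigr)\qquad\text{for }0<r<\tfrac12, \]
from which the asserted bound on $[u(\cdot,t)]_{C^\alpha(B_{1/2})}$, uniform in $t\in[-1/2,0]$, follows by a routine covering and translation argument. After normalizing the right--hand side to $1$ it is convenient to work with parabolic cylinders $Q_r=B_r\times(-r^{\sigma},0]$ and the parabolic rescaling $u_r(x,t)=u(rx,r^{\sigma}t)$, under which $\mathcal L_0(\sigma)$ and the extremal operators $M_\sigma^\pm$ are invariant. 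This parallels the proof of the elliptic estimate \eqref{Calphaelliptic}, with cylinders in place of balls, the parabolic ABP (Krylov--Tso) estimate in place of the elliptic one, and a parabolic cube decomposition in place of Calder\'on--Zygmund.

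Two analytic ingredients are needed. The first is a \emph{nonlocal parabolic point estimate}: there are dimensional constants $M>1$, $\mu\in(0,1)$, $\varepsilon_0>0$ such that if $u\ge 0$ in $\R^n\times(-1,0]$ satisfies $u_t-M_\sigma^- u\ge-\varepsilon_0$ in $Q_1$ and $\inf_{\overline{B_{1/2}}\times\{-1/2\}}u\le 1$, then $\bigl|\{u\le M\}\cap Q_{1/2}\bigr|\ge\mu\,|Q_{1/2}|$. This comes from a parabolic ABP estimate for $M_\sigma^-$ applied to smooth barriers touching $u$ from below, the genuinely nonlocal input being a \emph{special subsolution}: a function $\Psi$ with $\Psi\ge 0$ on $\R^n$, $\Psi\ge 2$ outside a fixed ball, $\Psi\le 0$ on $\overline{B_{1/4}}$, and $\Psi_t-M_\sigma^+\Psi\le C\chi_{B_{1/4}}$, constructed so that all constants stay bounded as $\sigma\nearrow2$ (where it degenerates to the classical Krylov--Safonov barrier) thanks to the $(2-\sigma)$--normalization of the kernels. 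The second ingredient is the parabolic \emph{stacked--cubes} covering lemma, which upgrades the point estimate into the power decay $\bigl|\{u>M^{j}\}\cap Q_{1/2}\bigr|\le C(1-\mu)^{j}$ for nonnegative supersolutions --- the parabolic $L^\varepsilon$ estimate.

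Granted these, the heart of the matter is the \emph{iteration with tails}. Fix a small $r=r(n,\lambda,\Lambda,\sigma_0)$ and put $\theta_k=\operatorname*{osc}_{Q_{r^{k}}}u$; the goal is $\theta_k\le r^{\alpha k}$ for all $k$, by induction. Assuming it through step $k$, consider $v=(u_{r^{k}}-c_k)/\theta_k$ with $c_k$ the midpoint of $u$ over $Q_{r^{k}}$, so that $|v|\le\tfrac12$ and $\operatorname*{osc}_{Q_1}v=1$. Then $v$ satisfies the two extremal inequalities with a small right--hand side in $Q_1$, while in the exterior the inductive hypothesis gives, by a dyadic--annulus summation, $|v(y,t)|\le C|y|^{\alpha}$ for $1\le|y|\le r^{-k}$, and the $L^1(\omega_{\sigma_0})$ hypothesis controls $|y|\ge r^{-k}$. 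Plugging this into the tail integrals $\int|v(y,t)|\,\omega_\sigma(y)\,dy$, the resulting contribution to the equation for $v$ on $Q_1$ is bounded by a fixed constant from the range $1\le|y|\le r^{-k}$ (\emph{not} small, but under control because $\sigma-\alpha>0$) plus $C\,r^{k(\sigma-\alpha)}$ from the far range (small once $k$ is large). One therefore invokes the version of the oscillation--decay lemma that tolerates such a prescribed polynomial growth at infinity: reducing to the case where $v\le0$ on a definite portion of $Q_1$ and applying the $L^\varepsilon$ estimate to $\tfrac12-v$, one gets $\operatorname*{osc}_{Q_r}v\le 1-\eta$ for a fixed $\eta>0$ provided $\alpha\le\alpha_0$ is small (here the restriction $\alpha<\sigma_0$ is used); the finitely many initial steps where this fails are absorbed into the final constant. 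Choosing $\alpha$ also so that $r^{\alpha}\ge 1-\eta$ gives $\theta_{k+1}=\theta_k\operatorname*{osc}_{Q_r}v\le r^{\alpha(k+1)}$, closing the induction, with all constants uniform in $\sigma\in[\sigma_0,2]$.

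The main obstacle is exactly this tail bookkeeping: arranging the recentering, the dyadic summation, and the choices of $\alpha$ and $r$ so that the exterior contributions --- which a priori accumulate over all previous scales --- close up with constants independent of the iteration index $k$ and of $\sigma$, which is where the weight $\omega_{\sigma_0}$ and the $(2-\sigma)$--normalized ellipticity are essential and where the dependence of $\alpha$ on $\sigma_0$ enters. By contrast, the parabolic ABP estimate, the construction of the special subsolution uniform up to $\sigma=2$, and the stacked--cubes covering are by now standard in the parabolic Krylov--Safonov theory (see \cite{ChD} and the references therein), and I would adapt them rather than redo the details.
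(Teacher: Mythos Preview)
The paper does not prove this theorem: it is quoted verbatim as Theorem~5.1 of \cite{ChD} and used as a black box throughout (the parabolic $C^\alpha$ estimate of Chang-Lara and D\'avila is precisely the ``basic parabolic $C^\alpha$ estimate on which all our argument relies'' mentioned at the start of Section~2). So there is no proof in the paper to compare your proposal against.

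That said, your sketch is the correct one and is essentially the argument carried out in \cite{ChD}: parabolic nonlocal ABP/Krylov--Tso, the special subsolution with constants uniform as $\sigma\nearrow 2$ via the $(2-\sigma)$ normalization, the point-to-measure estimate, the $L^\varepsilon$ decay via the stacked-cubes covering, and finally the oscillation-decay iteration with tails. You have also put your finger on the genuinely nonlocal difficulty --- the exterior contribution at each iteration step growing like $|y|^\alpha$ on the intermediate annuli, controlled only because $\alpha<\sigma_0$ --- which is exactly where the dependence of $\alpha$ on $\sigma_0$ and the weighted $L^1(\omega_{\sigma_0})$ hypothesis enter. Nothing is missing at the level of a sketch; filling in the barrier construction and the parabolic covering lemma uniformly in $\sigma\in[\sigma_0,2]$ is where the work lies, and that is done in \cite{ChD}.
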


Theorem 5.1 of \cite{ChD} contains also a $C^{\alpha/\sigma}$ estimate in time, for some $\alpha>0$. However,
for our argument we only need the estimate in space from \cite{ChD}, which is the one stated above.

Before stating our main result let us briefly recall some definitions (translation invariant elliptic operator, viscosity solution, etc.), which are by now standard
in the context of integro-differential equations.
They can be found in detail in \cite{CS, ChD}.

As in \cite{CS}, an operator $\I$ is said to be elliptic with respect to $\mathcal L_0(\sigma)$, $\sigma\in[\sigma_0,2]$, if
\[ M_\sigma^- (u-v)(x) \le  \I u(x)-\I v(x) \le M_\sigma^+(u-v)(x),\]
for all elliptic test functions $u,v$ at $x$, which are $C^2$ functions in a neighborhood of $x$ and having finite integral against the weight $\omega_{\sigma_0}$.
Recall that $\I$ is defined as a ``black box'' acting on test functions, with the only assumption that if $u$ is a test function at $x$, then $\I u$ is continuous near $x$.
The operator we have in mind is
\[ \I u(x) = \inf_{\alpha}\sup_\beta \bigl( L_{\alpha\beta} u + c_{\alpha,\beta}\bigl)\]
where $L_{\alpha\beta}\in \mathcal L_0(\sigma)$ and $\inf_{\alpha}\sup_\beta  c_{\alpha,\beta}=0$.

That $\I$ is translation invariant clearly means
\[\I \bigl(u(x_0 +\cdot )\bigr)(x) = (\I u) (x_0+x).\]

The definition we use of viscosity solutions (and inequalities) for parabolic equations  is the one in \cite{ChD}.
Namely, let $f$  and $u$ such be continuous functions in  a parabolic domain. Assume that
$\int_{\R^n} u(x,t) \omega_{\sigma_0}(x)\,dx$ is locally bounded  for all  $t$  in the domain. Then,  we say that   $u$ is a viscosity solution of
\[u_t -\I u =f\]
 if whenever a test function $v(x,t)$ touches by above (below) $u$ at $(x_0,t_0)$
we have $\bigl(v_{t^-} - \I v\bigr)(x_0,t_0) \le f(x_0,t_0)$  ($\ge$). For parabolic equations $v$ touching $u$  by above at $(x_0,t_0)$ means $v(x,t)\ge u(x,t)$ for all $x\in \R^n$ and for all $t\le t_0$.
As in \cite{ChD},  test functions $v$  are quadratic functions in some small cylinder and outside they have the same type of growth
as the solutions $u$. That is,
\[ v(x,t) =
a_{ij} x_i x_j + b_i x_i + ct +d \quad \mbox{ in the cylinder } \overline{B_\epsilon(x_0)} \times[t_0-\epsilon, t_0]\\
\]
for some $\epsilon>0$ and
\[ \|v(\,\cdot \,, t)\|_{L^1(\R^n,\omega_{\sigma_0})} = \int_{\R^n} |v(x,t)| \omega_{\sigma_0}(x)\,dx\]
if locally bounded for $t$ in the domain of the equation. As explained in \cite{ChD}, in order to have left continuity in time of $(\partial_t -\I) v(x,t)$,
one additionally requires test functions to satisfy $\lim_{t\nearrow t_0} \|v(\,\cdot \,, t)-v(\,\cdot \,, t_0)\|_{L^1(\R^n,\omega_{\sigma_0})} = 0$ for all $t_0$ in the domain.

Our main result is the following.
\begin{thm}\label{thm1}
Let $\sigma_0\in(0,2)$ and $\sigma\in{[\sigma_0,2]}$.
Let $u\in L^{\infty}\bigl(\R^n \times {(-1,0)}\bigr)$ be a viscosity solution of $u_t - \I u =f$ in $B_1\times(-1,0]$, where $\I$ is a translation
invariant elliptic operator with respect to the class $\mathcal L_0(\sigma)$ with $\I0=0$.
Let $\alpha=\alpha(\sigma_0)$ be given by Theorem \ref{holder}.

Then, for all $\epsilon>0$,  letting
\[\beta= \min\{\sigma, 1+\alpha\}-\epsilon,\]
 $u(\,\cdot\,,t)$
belongs to $C^\beta \left(\overline {B_{1/2}}\right)$ for all $t\in[-1/2,0]$, and $u(x,\,\cdot\,)$ belongs to $C^{\beta/\sigma}\left([-1/2,0]\right)$ for all $x\in B_{1/2}$. Moreover, the
following estimate holds
\[
\sup_{t\in[-1/2,0]} \bigl\|u(\,\cdot\,,t)\bigr\|_{C^\beta(B_{1/2})}+ \sup_{x\in B_{1/2}} \bigl\|u(x,\,\cdot\,)\bigr\|_{C^{\beta/\sigma}\left([-1/2,0]\right)} \le
 C C_0,
\]
where
\[C_0 =   \bigl\|u\|_{L^\infty\left(\R^n\times(-1,0)\right)} + \|f\|_{L^\infty(B_1\times(-1,0))}\]
and $C$ is a constant depending only on $\sigma_0$, $\epsilon$, ellipticity constants, and dimension.
\end{thm}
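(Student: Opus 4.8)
The plan is to prove Theorem~\ref{thm1} in two stages, each relying only on the $C^\alpha$ estimate of Theorem~\ref{holder}: first a Liouville theorem for entire viscosity solutions of controlled polynomial growth, then a blow-up and compactness argument deducing the interior estimate from it. The guiding principle — that such a Liouville theorem and an interior estimate are essentially equivalent — is classical; its role here is to bypass the breakdown of the naive iteration of the ``nonlocal'' estimate \eqref{Calphaelliptic}.

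\emph{Liouville theorem.} The first step is to prove: if $v\in C(\R^n\times(-\infty,0])$ is a viscosity solution of $v_t-\I v=0$ in $\R^n\times(-\infty,0]$ with $\I$ translation invariant, elliptic with respect to $\mathcal L_0(\sigma)$, and $\I0=0$, and if $\|v\|_{L^\infty(B_R\times(-R^\sigma,0])}\le C_1R^\beta$ for all $R\ge1$ with $\beta<\min\{\sigma,1+\alpha\}$, then $v(x,t)=a\cdot x+b$ is affine in $x$ and independent of $t$ (constant if $\beta<1$). The key structural fact is that, since $\I$ is translation invariant and elliptic, for every $h\in\R^n$ the difference $w_h:=v(\cdot+h,\cdot)-v$ satisfies $\partial_tw_h-M^+_\sigma w_h\le0$ and $\partial_tw_h-M^-_\sigma w_h\ge0$ with $C_0=0$, so that Theorem~\ref{holder} may be applied to $v$ and to each $w_h$, at every dyadic scale $R=2^k$. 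Applying it to $v$ at all scales and scaling back gives $[v]_{C^{\alpha}}$ a growth of order $R^{\beta-\alpha}$ on $B_R\times(-R^\sigma,0]$; then, using the current bound on $[v]_{C^{k\alpha}}$ to estimate $w_h$ in $L^\infty$, applying Theorem~\ref{holder} to $w_h$, and converting the resulting $C^\alpha$ bound for $w_h$ into a finite-second-difference bound for $v$ and hence (Marchaud-type) into a $C^{(k+1)\alpha}$ bound for $v$, one gains $\alpha$ in the H\"older exponent of $v$ at each step while the growth exponent drops by $\alpha$. After finitely many steps $v$ becomes $C^{1,\alpha}$ with $[\nabla v]_{C^{\alpha}}$ of growth order $R^{\beta-1-\alpha}$; since $\beta<1+\alpha$ this exponent is negative, so letting $R\to\infty$ forces $\nabla v$ to be a constant, $v$ to be affine in $x$, and then — using that ellipticity squeezes $\I$ of an affine function between $M^\pm_\sigma$ of it, both of which vanish — the equation gives $v_t=0$. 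The weighted $L^1$-tail in Theorem~\ref{holder} is handled by observing that $\beta<\sigma$ leaves room to run the entire iteration with the weight $\omega_{\sigma_0'}$ for a fixed $\sigma_0'\in(\beta,\sigma)$, for which every function produced (of growth $\le\beta<\sigma_0'$) has finite weighted integral; thus the two restrictions $\beta<1+\alpha$ and $\beta<\sigma$ are each used exactly once.

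\emph{Compactness and blow-up.} The compactness ingredient is: if $u_k$ solve $\partial_tu_k-\I_ku_k=f_k$ on $B_{R_k}\times(-R_k^\sigma,0]$ with $R_k\to\infty$, each $\I_k$ translation invariant and elliptic with respect to $\mathcal L_0(\sigma)$ with $\I_k0=0$, if $\|f_k\|_{L^\infty}\to0$, and if $\|u_k\|_{L^\infty(B_R\times(-R^\sigma,0])}\le CR^\beta$ for all $1\le R\le R_k/2$, then along a subsequence $u_k\to u$ locally uniformly and $u$ solves $\partial_tu-\I_\infty u=0$ in $\R^n\times(-\infty,0]$ for some translation invariant $\I_\infty$ elliptic with respect to $\mathcal L_0(\sigma)$ with $\I_\infty0=0$; the local uniform convergence comes from the $C^\alpha$ estimate of Theorem~\ref{holder} together with the growth bound (uniform equicontinuity on compact sets), the limit operator from a diagonal extraction using that ellipticity bounds $\I_k\phi(x_0)$ for each fixed smooth test function $\phi$, and the stability of the viscosity inequalities from the usual touching-test-function argument. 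Then one argues by contradiction: were Theorem~\ref{thm1} false, there would be $\I_k,f_k,u_k$ with $C_0\le1$ along which the Campanato-type quantity
\[\sup_{0<r<1/2}\ r^{-\beta}\,\inf_{q}\,\|u_k-q\|_{L^\infty(B_r(x_k)\times(t_k-r^\sigma,t_k])}\qquad(q\ \text{affine in}\ x,\ \text{constant in}\ t)\]
is unbounded; the standard ``worst point and scale'' selection produces $(x_k,t_k)$, radii $r_k\downarrow0$, near-optimal affine-in-$x$ polynomials $p_k$, and constants $\theta_k$ with $\theta_k\,r_k^{-\beta}\to\infty$, such that
\[v_k(x,t):=\frac{u_k(x_k+r_kx,\ t_k+r_k^\sigma t)-p_k(x)}{\theta_k}\]
satisfies: a non-degeneracy $\inf_q\|v_k-q\|_{L^\infty(B_1\times(-1,0])}\ge c_0>0$; a uniform growth bound $\|v_k\|_{L^\infty(B_R\times(-R^\sigma,0])}\le CR^\beta$ for $1\le R\le1/(2r_k)$ (obtained by telescoping the comparison of $p_k$ over intermediate scales); and $\partial_tv_k-\I_k'v_k=f_k'$ on an exhausting family of cylinders, where — by scale invariance of $\mathcal L_0(\sigma)$ and because $M^\pm_\sigma$ annihilates affine functions — the $\I_k'$ are translation invariant, elliptic with respect to $\mathcal L_0(\sigma)$, $\I_k'0=0$, and $\|f_k'\|_{L^\infty}=r_k^{\sigma-\beta}(r_k^\beta/\theta_k)\|f_k\|_{L^\infty}\to0$ (using $\beta<\sigma$). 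By the compactness step, $v_k\to v$ with $v$ an entire solution inheriting the growth bound and $\inf_q\|v-q\|_{L^\infty(B_1\times(-1,0])}\ge c_0$; since $\beta<\min\{\sigma,1+\alpha\}$, the Liouville theorem forces $v(x,t)=a\cdot x+b$, contradicting the non-degeneracy. Because the rescaling is parabolic, this delivers the $C^\beta$ bound in space and the $C^{\beta/\sigma}$ bound in time simultaneously.

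\emph{Main obstacle.} I expect the heart of the matter to be the Liouville theorem — in particular, organizing the iteration of Theorem~\ref{holder} so that each step genuinely upgrades the regularity of $v$: this requires carrying the differences $v(\cdot+h,\cdot)-v$ through the argument at the level of viscosity inequalities, converting the resulting H\"older bounds back into bounds on finite differences of $v$ and then into a H\"older gain, and tracking the polynomial growth through the exponents (including past integer values, up to but not beyond $\min\{\sigma,1+\alpha\}$). The secondary difficulty is the bookkeeping of the blow-up — selecting the worst scale $r_k$ and the subtracted polynomials $p_k$ so that the normalization is genuinely incompatible with affine-in-space, constant-in-time limits, and verifying the uniform growth bound on $v_k$ at every admissible scale via the telescoping estimate.
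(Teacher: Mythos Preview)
Your plan is correct and follows essentially the same route as the paper: a Liouville theorem proved by iterating Theorem~\ref{holder} on incremental quotients of the global solution at every scale, followed by a blow-up/compactness contradiction argument based on a Campanato-type quantity (the paper uses least-squares affine fits $\ell_{r,z}$ rather than $\inf_q$, and organizes the ``worst point and scale'' selection via a monotone supremum $\Theta(r)$, but these are equivalent implementations of the same idea, and your telescoping estimate for the growth of $v_k$ is exactly what the paper does).

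The one point you do not address is the \emph{uniformity} of the constant $C$ in $\sigma\in[\sigma_0,2]$: running your argument with $\sigma$ fixed yields a priori $C=C(\sigma,\epsilon)$, not $C=C(\sigma_0,\epsilon)$. The paper handles this in two steps: it first proves the estimate with the slightly weaker exponent $\beta<\min\{\sigma_0,1+\alpha\}$ but with a constant uniform over $\sigma\in[\sigma_0,2]$ (by allowing the orders $\sigma_k$ to vary along the contradiction sequence and using that the class $\bigcup_{\sigma\in[\sigma_0,2]}\mathcal L_0(\sigma)$ is still compact in the weak sense), and then recovers the sharper $\beta=\min\{\sigma,1+\alpha\}-\epsilon$ by partitioning $[\sigma_0,2]$ into subintervals of length $\le\epsilon/4$ and applying the uniform estimate on each. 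This is a minor bookkeeping addition to your plan, not a conceptual change.
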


\section{Liouville type theorem}
As said in the introduction, Theorem \ref{thm1} will follow from a Liouville type theorem, which we state below, and a blow up and compactness augment.

In all the paper, given $\sigma\in (0, 2]$ and $R>0$, $Q_R^\sigma$ denotes the parabolic cylinder
\begin{equation}\label{cylinders}
Q_R^\sigma := \bigl\{ (x,t)\,: \, |x|\le R \mbox{ and } -R^\sigma < t < 0 \bigr\}.
\end{equation}
For $z\in \R^{n}\times(\infty,0]$, the cylinder $z + Q_R^\sigma$  is  denoted as $Q_R^\sigma(z)$.

\begin{thm}\label{liouville}
Let $\sigma_0\in(0,2)$, $\sigma\in [\sigma_0,2]$,  and $\alpha=\alpha(\sigma_0)$ be given by Theorem \ref{holder}.
Let  $0<\beta<\min\{\sigma_0, 1+\alpha\}$. Let $\I$ be a translation invariant operator, elliptic with respect to $\mathcal L_0(\sigma)$, with $\I0=0$.
 Assume that $u$ in $C\bigl(\R^n \times(-\infty,0]\bigr)$ satisfies the growth control
\begin{equation}\label{growthcontrol}
\|u\|_{L^\infty(Q_R^{\sigma})}\le CR^\beta\quad \mbox{for all }R\ge 1
\end{equation}
and that it is a viscosity solution of
\[u_t = \I u \quad \mbox{in all of }\,\R^n\times(-\infty,0].\]

Then,  if $\beta<1$, $u$ is constant. And if $\beta\ge1$,  $u(x,t)=a\cdot x+b$ is an affine function of the $x$ variables only.
\end{thm}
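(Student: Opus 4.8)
The plan is to prove the Liouville theorem by a bootstrap on the growth exponent, using the parabolic $C^\alpha$ estimate of Theorem~\ref{holder} applied at every scale to suitable incremental quotients of $u$. Since $\I$ is translation invariant and $u$ solves $u_t=\I u$ in all of $\R^n\times(-\infty,0]$, for any $h\in\R^n$ both $u(\cdot+h,\cdot)$ and $u$ solve the equation, hence by ellipticity the difference $w_h(x,t):=u(x+h,t)-u(x,t)$ satisfies $(\partial_t - M^+_\sigma)w_h\le 0$ and $(\partial_t - M^-_\sigma)w_h\ge 0$ in the whole space. The same holds for second-order incremental quotients $u(x+h,t)+u(x-h,t)-2u(x,t)$. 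The strategy is then: (i) rescale $u$ to the unit cylinder, apply Theorem~\ref{holder}, and exploit the growth control \eqref{growthcontrol} to get that $u$ is $C^\alpha$ in space \emph{at every scale}, with the right scaling; (ii) feed the incremental quotient of order $\alpha$ back into Theorem~\ref{holder} — this quotient now has \emph{global} growth of order $\beta-\alpha$ (one order less than $u$), precisely because we are working in all of $\R^n$ and not in $B_1$, which is exactly the point that fails in the local-to-$B_1$ setting; (iii) iterate, gaining $\alpha$ each time, until the growth exponent of the relevant quotient drops below $0$, forcing that quotient to vanish identically.

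In detail: set $\beta_0=\beta<\min\{\sigma_0,1+\alpha\}$. Rescaling $u$ in $Q_R^\sigma$ to $Q_1^\sigma$ by $u_R(x,t):=R^{-\beta_0}u(Rx,R^\sigma t)$, one checks $u_R$ solves the (rescaled, still $\mathcal L_0(\sigma)$-elliptic, translation invariant) equation with $\|u_R\|_{L^\infty(\R^n\times(-1,0))}\le C$ after also accounting for the weighted $L^1$ tail, which is finite and bounded by $CR^{\beta_0}$ times a convergent integral because $\beta_0<\sigma_0$. Theorem~\ref{holder} gives $[u_R(\cdot,t)]_{C^\alpha(B_{1/2})}\le C$ uniformly, and unscaling yields $[u(\cdot,t)]_{C^\alpha(B_{R/2})}\le CR^{\beta_0-\alpha}$ for all $R\ge1$. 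Now if $\beta_0\le\alpha$ this already says the $C^\alpha$ seminorm of $u(\cdot,t)$ on $B_{R/2}$ tends to $0$ as $R\to\infty$ (if $\beta_0<\alpha$) or is globally bounded (if $\beta_0=\alpha$); combined with the sublinear growth (for $\beta_0<1$) and a Liouville argument for the equation, $u$ must be constant in $x$, and then the equation plus $\I 0=0$ and the sublinear-in-time growth force $u$ constant. If $\beta_0>\alpha$, consider $v_h(x,t)=\frac{u(x+h,t)-u(x,t)}{|h|^\alpha}$; by the estimate just obtained $\|v_h\|_{L^\infty(Q_R^\sigma)}\le CR^{\beta_0-\alpha}$ uniformly in $h$, and $v_h$ satisfies the two extremal inequalities globally. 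Applying the previous step to $v_h$ in place of $u$ with exponent $\beta_1=\beta_0-\alpha$, and iterating, after finitely many steps we reach an exponent $\beta_k<\alpha$; at that stage the $k$-th order incremental quotient (composition of the first-order ones, or a single higher-order difference quotient) has $C^\alpha$-in-space seminorm on $B_{R/2}$ decaying to $0$, hence is constant in $x$, hence — tracking back — $u(\cdot,t)$ is a polynomial in $x$ of degree at most $\lceil\beta_0/\alpha\rceil$ with the corresponding growth $CR^{\beta_0}$. Since $\beta_0<1+\alpha\le 2$ forces $\beta_0<2$, the polynomial has degree at most $1$: $u(x,t)=a(t)\cdot x+b(t)$. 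Finally plug this into $u_t=\I u$: because $\I$ is translation invariant and elliptic with respect to $\mathcal L_0(\sigma)$ and $\I 0=0$, one has $M^-_\sigma(a(t)\cdot x)\le \I(a(t)\cdot x)\le M^+_\sigma(a(t)\cdot x)$, and an odd/reflection symmetry argument (the extremal operators annihilate affine functions, since $\frac{\ell(x+y)+\ell(x-y)}2-\ell(x)\equiv 0$ for affine $\ell$) gives $\I(a(t)\cdot x)=0$; hence $a'(t)\cdot x+b'(t)=0$ for all $x$, so $a$ and $b$ are constant. When $\beta_0<1$ the degree-$0$ case applies directly and $u$ is constant.

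I expect the main obstacle to be step (ii): making precise that the incremental quotient $v_h$ genuinely has \emph{uniform-in-$h$} global growth of the next lower order, and that it is an admissible function for Theorem~\ref{holder} (continuity, finiteness of the weighted $L^1$ tail, and the fact that it satisfies the two viscosity inequalities globally). The viscosity-inequality part requires the standard but slightly delicate fact that differences of viscosity solutions of a translation-invariant equation are viscosity sub/supersolutions of the extremal equations — for nonlocal operators with the weak notion of viscosity solution used here (quadratic test functions with prescribed tails) this needs a careful check, and one must also verify that the growth bound \eqref{growthcontrol} for $v_h$ is inherited with the correct power of $R$ \emph{and} that the weighted tail in Theorem~\ref{holder} stays controlled — this is where $\beta<\sigma_0$ is used, to keep $\int u(x,t)\omega_{\sigma_0}\,dx$ finite at every scale after rescaling. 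Once this is in place, the iteration and the final algebraic step with the extremal operators on affine functions are routine.
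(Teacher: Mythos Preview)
Your proposal is correct and follows essentially the same approach as the paper: rescale to apply Theorem~\ref{holder} at every scale, obtain $[u(\cdot,t)]_{C^\alpha(B_R)}\le CR^{\beta-\alpha}$, pass to incremental quotients (which satisfy the two extremal inequalities by translation invariance), and iterate, lowering the growth exponent by $\alpha$ at each step until the relevant quantity is forced to vanish as $R\to\infty$. The only cosmetic difference is in the endgame: the paper, after $N$ steps with $(N+1)\alpha>1$, passes directly to $v_h^{(1)}=(u(\cdot+h,\cdot)-u)/|h|$, applies Theorem~\ref{holder} once more to get $\bigl\|\frac{D_xu(\cdot+h,\cdot)-D_xu}{|h|^\alpha}\bigr\|_{L^\infty(Q_R)}\le CR^{\beta-1-\alpha}\to 0$, and concludes $D_xu$ is constant; you instead phrase it as ``the $k$-th incremental quotient is constant, so $u$ is a polynomial, and the growth $\beta<2$ forces degree $\le 1$.'' Both routes land at $u(x,t)=a(t)\cdot x+b(t)$, and the final step ($\I$ annihilates affine functions, hence $a,b$ constant) is identical.
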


\begin{proof}
For all $\rho\ge1$ consider $v_\rho(x,t) = \rho^{-\beta} u(\rho x,\rho^\sigma t)$. Note that the growth control on
$u$ is transferred to $v_\rho$. Indeed,
\[ \|v_\rho\| _{L^\infty(Q_R^{\sigma})} = \rho^{-\beta}\|u\| _{L^\infty(Q_{\rho R}^\sigma)} \le C\rho^{-\beta}  (\rho R)^{\beta} = CR^\beta \quad \mbox{for all }R\ge 1\]
Hence, since $\beta<\sigma_0$,
\[ \sup_{t\in [-1,0]} \| v_\rho(\,\cdot\,,t)\|_{L^1(\R^n, \omega_{\sigma_0})} \le C(n, \sigma, \beta).\]
Moreover, since $u$ is satisfies $u_t \le M_\sigma^+ u$ and $u_t \ge M_\sigma^- u$ in $\R^n\times(-\infty,0]$, also $v_\rho$ satisfies the same inequalities.

By applying Theorem \ref{holder} to the function $v_\rho$ we obtain
\[\sup_{t\in[- 1/2,0]} \bigl[v_\rho(\,\cdot\,,t)\bigr]_{C^\alpha(B_{1/2})} \le C.\]
Scaling back to $u$ the previous estimate (setting $\rho= 2^{1/\sigma}R$) we obtain
\[\sup_{t\in(- R^\sigma,0]} \bigl[u(\,\cdot\,,t)\bigr]_{C^\alpha(B_{R})} \le C R^{\beta-\alpha}.\]
In this way for all $h\in \R^n$ we have an improved growth control for the incremental quotient
\[ v^{(\alpha)}_h (x,t) = \frac{u(x+h, t)-u(x,t)}{|h|^{\alpha}}.\]
Namely,
\[
\|v^{(\alpha)}_h\|_{L^\infty(Q_R^\sigma)}\le CR^{\beta-\alpha}\quad \mbox{for all }R\ge 1.
\]
Now, $v^{(\alpha)}_h$ satisfies again $\bigl(v^{(\alpha)}_h\bigr)_t \le M_\sigma^+ v^{(\alpha)}_h$ and $\bigl(v^{(\alpha)}_h\bigr)_t  \ge M_\sigma^- v^{(\alpha)}_h$. Hence, we may repeat
the previous scaling augment and use Theorem \ref{holder} to obtain
\[\sup_{t\in[- R^\sigma,0]} \bigl[ v^{(\alpha)}_h(\,\cdot\,,t)\bigr]_{C^\alpha(B_{R})} \le C R^{\beta-2\alpha}.\]
And this provides an improved growth control for
\[ v^{(2\alpha)}_h (x,t) = \frac{u(x+h, t)-u(x,t)}{|h|^{2\alpha}},\]
that is,
\[
\|v^{(2\alpha)}_h\|_{L^\infty(Q_R^\sigma)}\le R^{\beta-2\alpha}\quad \mbox{for all }R\ge 1.
\]
It is clear that we may keep iterating in this way, improving the growth control by $\alpha$ at each step.

After a bounded number of $N$ of iterations we  will have $(N+1)\alpha>1$ and we will obtain
\[\sup_{t\in[- R^\sigma,0]} \bigl[ v^{(N\alpha)}_h(\,\cdot\,,t)\bigr]_{C^\alpha(B_{R})} \le C R^{\beta-N\alpha},\]
which implies
\begin{equation}\label{estimatelast1}
\|v^{(1)}_h\|_{L^\infty(Q_R^\sigma)}\le R^{\beta-1}\quad \mbox{for all }R\ge 1.
\end{equation}
As usual with fully nonlinear equations we may do a last iteration to obtain a $C^{1,\alpha}$ estimate by using that $v^{(1)}_h$ satisfies the
two viscosity inequalities. Thus, using one more time Theorem \ref{holder} at every scale and \eqref{estimatelast1}
we obtain
\[
\left\| \frac{D_x u(x+h,t) - D_x u(x,t) }{|h|^\alpha} \right\|_{L^\infty(Q_R)}\le R^{\beta-1-\alpha}\quad \mbox{for all }R\ge 1.
\]
Above $D_x$ denotes any derivative with respect to some of the space variables.

Therefore, since by assumption $\beta<1+\alpha$,  sending $R\nearrow\infty$ we obtain
\[D_x u(x+h,t)=D_xu(x,t)\quad \mbox{for all }h\in\R^n.\]
Hence, $D_x u$ depends on the variable $t$ only. But since $D_x u$ satisfies
\[(D_xu)_t \le M_\sigma^+ (D_x u)=0\quad \mbox{and}\quad (D_xu)_t \ge M_\sigma^- (D_x u)=0\]
then it is $(D_xu)_t = 0$ in all of $\R^n\times (-\infty,0]$. Therefore,
\[u(x,t) = a\cdot x + \psi(t).\]

Finally, since $u_t= \I u=0$ we have $\psi(t)=b$ for some constant $b\in \R$.  Moreover, in the case $\beta<1$
the growth control yields $a=0$.
\end{proof}

\section{Preliminary lemmas and proof of Theorem \ref{thm1}}
As said above the proof of Theorem \ref{thm1} is by compactness. The following result is a consequence of the theory in \cite{CS2}
and provides compactness under weak convergence of sequences of elliptic operators which are elliptic with respect to some $\mathcal L_0(\sigma)$, with $\sigma$
varying in the interval $[\sigma_0,2]$.
We use the definition from \cite{CS2} of weak convergence of operators.
Namely, a sequence of translation invariant operators $\I_m$ is said to converge weakly to $\I$  if
for all $\epsilon>0$ and test function $v$, which is a quadratic polynomial in  $B_{\epsilon}$ and belongs to $L^1(\R^n,\omega_{\sigma_0})$, we have
\[  \I_m v (x) \rightarrow \I v(x)\quad \mbox{uniformly in }\overline{B_{\epsilon/2}}.\]
\begin{lem}\label{lemweak}
Let $\sigma_0\in (0,2)$, $\sigma_m\in [\sigma_0,2]$, and $\I_m$ such that
\begin{itemize}
\item $\I_m$ is translation invariant and elliptic with respect to $\mathcal L_0(\sigma_m)$.
\item $\I_m 0=0$.
\end{itemize}
Then, a subsequence of $\sigma_m\to \sigma \in [\sigma_0,2]$ and a subsequence of $\I_m$ converges weakly to some translation
invariant operator $\I$ elliptic with respect to $\mathcal L_0(\sigma)$.
\end{lem}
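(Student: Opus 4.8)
The plan is to follow \cite{CS2}: first pass to a subsequence along which the orders $\sigma_m$ converge, and then extract, by Arzel\`a--Ascoli and a diagonal argument, a subsequence of $\I_m$ whose action on test functions converges, the limit being the sought operator $\I$. Since $\{\sigma_m\}\subset[\sigma_0,2]$ is bounded, after relabeling we may assume $\sigma_m\to\sigma$ for some $\sigma\in[\sigma_0,2]$. Next I would establish a bound, \emph{uniform in} $m$, for the action of $\I_m$ on test functions. Fix $\epsilon>0$ and a test function $v$ --- a quadratic polynomial in $B_\epsilon$ with $v\in L^1(\R^n,\omega_{\sigma_0})$. Since $\I_m0=0$ and $\I_m$ is elliptic with respect to $\mathcal L_0(\sigma_m)$, for $x\in\overline{B_{\epsilon/2}}$ one has $M^-_{\sigma_m}v(x)\le\I_mv(x)\le M^+_{\sigma_m}v(x)$. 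Splitting the defining integrals into $\{|y|<\epsilon/2\}$ --- where, near $x$, the second difference of $v$ equals the constant quadratic form $\tfrac12\,y^{T}(D^2v)\,y$ --- and $\{|y|\ge\epsilon/2\}$, and using the ellipticity bounds on the kernels together with the normalization factor $(2-\sigma)$ (and the constant $c_n$ when $\sigma=2$), one checks that there is $C$, independent of $m$, with
\[|M^\pm_{\sigma_m}v(x)|\le C\Bigl(\|v\|_{C^2(\overline{B_\epsilon})}+\|v\|_{L^1(\R^n,\omega_{\sigma_0})}\Bigr),\qquad x\in\overline{B_{\epsilon/2}}.\]
Applying this bound to $v(\cdot+h)-v(\cdot)$ --- which is affine near the origin, so that only its $L^1(\R^n,\omega_{\sigma_0})$ tail, tending to $0$ as $h\to0$, contributes --- and using translation invariance in the form $\I_mv(x+h)-\I_mv(x)=\I_m\bigl(v(h+\cdot)\bigr)(x)-\I_mv(x)$, we get that $\{x\mapsto\I_mv(x)\}_m$ is bounded and equicontinuous on $\overline{B_{\epsilon/2}}$.

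Then I would run the standard Arzel\`a--Ascoli plus diagonal extraction. Choosing a countable family $\{v_k\}$ of test functions that is dense, for the norm $\|v\|_{C^2(\overline{B_\epsilon})}+\|v\|_{L^1(\R^n,\omega_{\sigma_0})}$, among test functions quadratic in a given ball (over a countable collection of balls), pass to a further subsequence of $\I_m$ along which $\I_mv_k$ converges uniformly on the associated ball, for every $k$; call the limit $\I v_k$. The uniform continuity bound
\[\|\I_mv-\I_mv_k\|_{L^\infty(\overline{B_{\epsilon/2}})}\le C\Bigl(\|v-v_k\|_{C^2(\overline{B_\epsilon})}+\|v-v_k\|_{L^1(\R^n,\omega_{\sigma_0})}\Bigr)\]
then shows that $\{\I_mv\}_m$ is uniformly Cauchy for every test function $v$, so we may set $\I v:=\lim_m\I_mv$; by construction $\I_m\to\I$ weakly in the sense defined above. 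Passing to the limit in $\I_m\bigl(v(x_0+\cdot)\bigr)(x)=(\I_mv)(x_0+x)$ gives that $\I$ is translation invariant, and $\I0=0$ is immediate.

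It remains to see that $\I$ is elliptic with respect to $\mathcal L_0(\sigma)$. For test functions $v,w$ at a point $x$ we have $M^-_{\sigma_m}(v-w)(x)\le\I_mv(x)-\I_mw(x)\le M^+_{\sigma_m}(v-w)(x)$, and we let $m\to\infty$. Since $v-w$ is $C^2$ near $x$, lies in $L^1(\R^n,\omega_{\sigma_0})$, and $\sigma_m\to\sigma$, the extremal operators converge at $x$: $M^\pm_{\sigma_m}(v-w)(x)\to M^\pm_\sigma(v-w)(x)$. For $\sigma<2$ this is a direct dominated-convergence argument; the only genuinely delicate point --- which is \emph{the main obstacle} --- is to make all the above estimates and this last convergence hold uniformly up to $\sigma=2$, i.e.\ to control the degenerate limit $\sigma_m\nearrow2$, and this is precisely what the normalization factor $(2-\sigma)$ and the constant $c_n$ in the definition of the classes $\mathcal L_0(\sigma)$ are built to handle (this is part of the theory in \cite{CS2}). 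We conclude $M^-_\sigma(v-w)(x)\le\I v(x)-\I w(x)\le M^+_\sigma(v-w)(x)$, which is the desired ellipticity of $\I$ with respect to $\mathcal L_0(\sigma)$.
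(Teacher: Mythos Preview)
Your proposal is correct and follows essentially the same approach as the paper: extract a convergent subsequence of $\sigma_m$, obtain compactness of $\I_m$ on test functions via uniform bounds from the extremal operators, and then pass the ellipticity inequalities $M^\pm_{\sigma_m}\to M^\pm_\sigma$ to the limit. The only cosmetic difference is that the paper packages the compactness step by introducing the single umbrella class $\mathcal L=\bigcup_{\sigma\in[\sigma_0,2]}\mathcal L_0(\sigma)$ and invoking Theorem~42 of \cite{CS2} directly, whereas you write out the Arzel\`a--Ascoli/diagonal argument by hand.
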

\begin{proof}
We may assume by taking a subsequence that $\sigma_m\to \sigma \in [\sigma_0,2]$.
Consider the class $\mathcal L = \bigcup_{\sigma\in[\sigma_0,2]} \mathcal L_0(\sigma)$. This class satisfies Assuptions 23 and 24
of \cite{CS2}. Also,  each $\I_m$ is elliptic with respect to $\mathcal L$. Hence using Theorem 42 in \cite{CS2} there is a subsequence
of $\I_m$ converging weakly (with respect to the weight $\omega_{\sigma_0}$) to a translation invariant operator $\I$, also elliptic with respect to $\mathcal L$. To see that $\I$ is in fact elliptic with
respect to $\mathcal L_0(\sigma)\subset \mathcal L$ we just observe that for test functions $u$ and $v$ that are quadratic polynomials in a neighborhood of $x$ and that belong to  $L^1(\R^n,\omega_{\sigma_0})$,
the inequalities
\[ M^-_{\sigma_m} v(x)\le \I_m(u+v)(x)-\I_m u(x)\le M^+_{\sigma_m} v(x) \]
pass to the limit to obtain
\[ M_{\sigma}^-v(x)\le \I(u+v)(x)-\I u(x)\le M_\sigma^+ v(x).\]
\end{proof}

The following result from \cite{ChD2} is a parabolic version of Lemma 5 in \cite{CS2}. It is the basic stability result which is needed in compactness arguments.
\begin{lem}[Reduced version of {\cite[Theorem 5.3]{ChD2}}]\label{lem5parab}
Let $R>0$, $T>0$, and $\I_m$ be a sequence of translation invariant elliptic operators. Let
$u_m \in C\bigl(\overline{B_R}\times[-T,0]\bigr)$  be viscosity solutions of
\[\partial_ t u_m - \I_m u_m =f_m\quad \mbox{in }B_R\times(-T,0].\]
Assume that
\[\I_m \rightarrow \I\quad \mbox{weakly with respect to }\omega_{\sigma_0}, \]
\[u_m(x,t)\rightarrow u(x,t) \quad \mbox{uniformly in } \overline{B_R}\times[-T,0],\]
\[  \sup_{t\in[-T,0]}\int_{\R^n} \bigl|u_m-u\bigr|(x,t) \omega_{\sigma_0}(x)\,dx \longrightarrow 0,\]
and
\[f_m \rightarrow 0  \quad \mbox{uniformly in } \overline{B_R}\times[-T,0].\]

Then, $u$ is a viscosity solution of $\partial_ t u =\I u$ in  $B_R\times(-T,0]$.
\end{lem}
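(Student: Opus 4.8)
The plan is to establish the parabolic analogue of the stability lemma \cite[Lemma~5]{CS2} by the usual touching-function argument, adapted both to the parabolic nonlocal setting and to the fact that the orders $\sigma_m$ are only confined to the interval $[\sigma_0,2]$. Since the hypotheses are symmetric under $u\mapsto -u$, it suffices to show that $u$ is a viscosity \emph{subsolution} of $\partial_t u=\I u$ in $B_R\times(-T,0]$; so let $\phi$ be an admissible test function touching $u$ from above at some $(x_0,t_0)\in B_R\times(-T,0]$, and the goal is $(\partial_{t^-}\phi-\I\phi)(x_0,t_0)\le 0$. First I would reduce to the case in which the touching is \emph{strict} on a small closed parabolic cylinder $\overline{Q}=\overline{B_{r_0}(x_0)}\times[t_0-r_0,t_0]$, by replacing $\phi$ with $\phi_\eta(x,t)=\phi(x,t)+\eta\,\zeta(x)\bigl(|x-x_0|^2+(t_0-t)\bigr)$, $\eta>0$, where $\zeta\in C_c^\infty(\R^n)$ equals $1$ near $x_0$ and is supported in the region where $\phi$ is a quadratic polynomial; then $\phi_\eta$ is still admissible and touches $u$ from above at $(x_0,t_0)$, while $u-\phi_\eta<0$ on $\overline{Q}\setminus\{(x_0,t_0)\}$. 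Since $\phi_\eta-\phi$ is compactly supported and of size $O(\eta)$ in $C^2$ and in $L^1(\R^n,\omega_{\sigma_0})$, and $\partial_{t^-}\phi_\eta(x_0,t_0)=\partial_{t^-}\phi(x_0,t_0)-\eta$, the inequality proved for $\phi_\eta$ will pass to $\phi$ as $\eta\to 0$: indeed $|\I\phi_\eta(x_0,t_0)-\I\phi(x_0,t_0)|\le C\eta$, because the analogous bound $|\I_m\phi_\eta-\I_m\phi|(x_0,t_0)\le C\eta$ holds uniformly in $m$ by the uniform ellipticity of the $\I_m$, and then passes to the weak limit.

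With strict touching in hand I would produce contact points for the $u_m$. Because $u_m\to u$ uniformly on $\overline{B_R}\times[-T,0]\supset\overline{Q}$, and $u-\phi\le -c<0$ on the parabolic boundary of $\overline{Q}$ by strictness, for $m$ large the maximum of $u_m-\phi$ over $\overline{Q}$ is attained at some $(x_m,t_m)$ with $x_m\in B_{r_0}(x_0)$ and $t_m\in(t_0-r_0,t_0]$; moreover $(x_m,t_m)\to(x_0,t_0)$ and $d_m:=\max_{\overline{Q}}(u_m-\phi)\to 0$. As competitor I would take the function $v_m$ equal to $\phi+d_m$ on $\overline{Q}\cap\{t\le t_m\}$ and equal to $u_m$ elsewhere in $\{t\le t_m\}$: by the choice of $d_m$ it satisfies $v_m\ge u_m$ for all $x\in\R^n$ and all $t\le t_m$, with equality at $(x_m,t_m)$; it coincides with the quadratic polynomial $\phi+d_m$ in a small cylinder about $(x_m,t_m)$; and it has finite $\omega_{\sigma_0}$-integral since it agrees with $u_m$ at infinity. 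Hence $v_m$ is an admissible test function for $u_m$ at $(x_m,t_m)$, and the equation for $u_m$ gives $\partial_{t^-}\phi(x_m,t_m)-\I_m v_m(x_m,t_m)\le f_m(x_m,t_m)$.

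The heart of the proof is the passage to the limit $m\to\infty$. On one side, since $\partial_{t^-}\phi(x_m,t_m)\to\partial_{t^-}\phi(x_0,t_0)$ and $f_m(x_m,t_m)\to 0$, one gets $\liminf_m\I_m v_m(x_m,t_m)\ge\partial_{t^-}\phi(x_0,t_0)$. On the other side I would show $\limsup_m\I_m v_m(x_m,t_m)\le\I\phi(x_0,t_0)$: by ellipticity of $\I_m$ with respect to $\mathcal L_0(\sigma_m)$, $\I_m v_m(x_m,t_m)\le\I_m\phi(x_m,t_m)+M^+_{\sigma_m}(v_m-\phi)(x_m,t_m)$, and $M^+_{\sigma_m}(v_m-\phi)(x_m,t_m)\to 0$ because in a fixed ball $B_\delta(x_m)$ the difference $v_m-\phi\equiv d_m$ is constant, so the $\{|y|\le\delta\}$-part vanishes, while on $\{|y|>\delta\}$ one has $(v_m-\phi)^+\le|d_m|+|u_m-u|$ (here $u\le\phi$ forces $(u_m-\phi)^+\le(u_m-u)^+$) and the kernels obey $\frac{2-\sigma_m}{|y|^{n+\sigma_m}}\le C_\delta\,\omega_{\sigma_0}(y)$ for $|y|>\delta$ uniformly in $\sigma_m\in[\sigma_0,2]$, so the tail is $\le C_\delta\bigl(|d_m|+\sup_t\|u_m(\cdot,t)-u(\cdot,t)\|_{L^1(\R^n,\omega_{\sigma_0})}\bigr)\to 0$. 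Finally $\I_m\phi(x_m,t_m)\to\I\phi(x_0,t_0)$: splitting off the time increment $\phi(\cdot,t_m)-\phi(\cdot,t_0)$, which near $x_0$ is a constant in $x$ (the $x$-quadratic part of an admissible test function does not depend on $t$) and is small in $L^1(\R^n,\omega_{\sigma_0})$ by the left-continuity in time required of test functions, the same tail estimate gives $\I_m\phi(x_m,t_m)-\I_m\phi(x_m,t_0)\to 0$; and $\I_m\phi(x_m,t_0)\to\I\phi(x_0,t_0)$ by the weak convergence $\I_m\to\I$ (uniform in a neighborhood of $x_0$) together with $x_m\to x_0$. Combining the two bounds yields $(\partial_{t^-}\phi-\I\phi)(x_0,t_0)\le 0$; letting $\eta\to 0$ closes the subsolution case, and the supersolution case is entirely symmetric (touch from below, take $d_m=\max_{\overline{Q}}(\phi-u_m)$, and use $M^+_{\sigma_m}(\phi-v_m)(x_m,t_m)\to 0$ via $\phi\le u$). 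I expect the only genuine difficulty to be exactly this limit of the nonlocal terms: the $\sigma$-uniform domination of the kernel tails by $\omega_{\sigma_0}$ and the one-sided estimate $(v_m-\phi)^+\le|d_m|+|u_m-u|$ are what make it go through, whereas the near-diagonal part is harmless because $v_m$ is, by construction, a vertical translate of the smooth test function $\phi$ near the contact point.
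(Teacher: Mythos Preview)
The paper does not give its own proof of this lemma; it is quoted as a reduced version of \cite[Theorem~5.3]{ChD2} and used as a black box. Your proposal supplies a direct proof via the standard viscosity-stability argument---the parabolic analogue of \cite[Lemma~5]{CS2}---and this is indeed essentially how the result is established in \cite{ChD2}. The main steps (perturb to strict touching, glue $u_m$ with a vertical translate of the test function to obtain an admissible competitor at nearby contact points $(x_m,t_m)$, and pass to the limit in the nonlocal term) are correct. In particular, the two ingredients you single out are exactly the right ones: the one-sided bound $(v_m-\phi)^+\le |d_m|+|u_m-u|$, which uses that $\phi\ge u$ globally for $t\le t_0$ in the parabolic definition of touching, and the uniform tail domination $(2-\sigma_m)|y|^{-n-\sigma_m}\le C_\delta\,\omega_{\sigma_0}(y)$ for $|y|>\delta$ and $\sigma_m\in[\sigma_0,2]$, which is what lets the hypothesis $\sup_t\|u_m-u\|_{L^1(\omega_{\sigma_0})}\to 0$ absorb the far-field error uniformly in the order. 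One small correction: the reduction to the subsolution case is not literally via $u\mapsto -u$, since the operators $\I_m$ are nonlinear; but, as you yourself note at the end, the supersolution argument is obtained by the evident symmetry (touch from below, swap $M^+$ and $M^-$), so this is only a phrasing issue.
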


The following useful lemma states that if in a sequence of nested sets a function $u$ is close enough to  its ``least squares''  fitting plane,  then $u$ is $C^\beta$ with $\beta \in(1,2)$.
\begin{lem}\label{lemaux}
Let $\sigma\in(1,2]$, $\beta\in(1,\sigma)$, and let $u$ be a continuous function belonging to $L^\infty(Q_{\infty})$, where $Q_\infty =\R^n\times (-\infty,0]$.
For $z = (z',z_{n+1})\in \R^n\times (-\infty,0]$ and $r>0$, define the constant in $t$ affine function
\begin{equation}\label{ell}
\ell_{r,z}  (x,t) := a^*\cdot (x-z') +b^*,
\end{equation}
where
\begin{equation}
 a^*_i = a^*_i (r,z)  = \frac{\int_{Q^\sigma_r(z)} u(x,t)(x_i -z_i) \,dx\,dt}{\int_{Q^\sigma_r(z)} (x_i-z_i)^2\,dx\,dt},\quad1\le i\le n,
\end{equation}
and
\begin{equation}
 b^* = b^* (r,z)  = \ave_{Q^\sigma_r(z)} u(x,t) \,dx\,dt,
\end{equation}
where $Q^r_\sigma(z)$ was defined in \eqref{cylinders}
Equivalently,
\[(a^*,b^*)= {\rm arg\,min} \int_{Q^\sigma_r(z)} \bigl(u(x,t) - a\cdot (x-z') +b \bigr)^2\,dx\,dt.\]

If for some constant $C_0$ we have
\begin{equation}\label{allrz}
\sup_{r>0} \sup_{z\in Q_{1/2}} r^{-\beta}\bigr\| u - \ell_{r,z} \bigr\|_{L^\infty(Q^\sigma_r(z))} \le C_0,
\end{equation}
where $Q_{1/2}= B_{1/2}\times (-1/2,0]$, then
\begin{equation}\label{holderbound}
\sup_{t\in[-1/2,0]} \bigl\|u(\,\cdot\,,t)\bigr\|_{C^\beta(B_{1/2})}+ \sup_{x\in B_{1/2}} \bigl\|u(x,\,\cdot\,)\bigr\|_{C^{\beta/\sigma}\left([-1/2,0]\right)} \le C\bigl(\|u\|_{L^\infty(Q_\infty)} + C_0\bigr),
\end{equation}
where $C$ depends only on $\beta$.
\end{lem}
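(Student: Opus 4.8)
The plan is to prove \eqref{holderbound} by a Campanato--Morrey type iteration across the scales of the parabolic cylinders $Q_r^\sigma(z)$, whose only non-trivial ingredient is the elementary remark that an affine-in-$x$ function $p(x,t)=a\cdot x+b$ which is independent of $t$ and satisfies $\|p\|_{L^\infty(B_\rho(x_0)\times I)}\le M$ obeys $|a|\le M/\rho$ (half its spatial oscillation divided by the scale) and $|p(x_0,\cdot)|\le M$. First I would use \eqref{allrz} to compare the fitting functions at consecutive dyadic scales: on $Q_{r/2}^\sigma(z)\subset Q_r^\sigma(z)$ one has $\|\ell_{r,z}-\ell_{r/2,z}\|_{L^\infty}\le\|u-\ell_{r,z}\|_{L^\infty(Q_r^\sigma(z))}+\|u-\ell_{r/2,z}\|_{L^\infty(Q_{r/2}^\sigma(z))}\le 2C_0 r^\beta$, and since $\ell_{r,z}-\ell_{r/2,z}$ is affine in $x$ on the spatial ball $B_{r/2}(z')$ the remark gives
\[ |a^*(r/2,z)-a^*(r,z)|\le CC_0\,r^{\beta-1},\qquad |b^*(r/2,z)-b^*(r,z)|\le CC_0\,r^\beta. \]
Since $\beta>1$, summing the geometric series along $r=2^{-k}$ (and a one-step comparison to reach non-dyadic radii) shows that $a^*(r,z)\to a(z)$ and $b^*(r,z)\to b(z)$ as $r\to 0$, with $|a^*(r,z)-a(z)|\le CC_0 r^{\beta-1}$ and $|b^*(r,z)-b(z)|\le CC_0 r^\beta$ for $0<r\le 1$; as $b^*(r,z)=\ell_{r,z}(z)$ and $u$ is continuous, passing to the closure of the cylinder in \eqref{allrz} (whose corner is $z$) identifies $b(z)=u(z)$, while the a priori bound $|a^*(1,z)|\le\|\ell_{1,z}\|_{L^\infty(Q_1^\sigma(z))}\le\|u\|_{L^\infty(Q_\infty)}+C_0$ gives $|a(z)|\le C(\|u\|_{L^\infty(Q_\infty)}+C_0)$.

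Next I would show that $z\mapsto a(z)$ is Hölder-$(\beta-1)$ in the space variables. For $x,y\in B_{1/2}$, $t\in(-1/2,0]$ and $\rho=|x-y|$, the cylinders $Q_{2\rho}^\sigma((x,t))$ and $Q_{2\rho}^\sigma((y,t))$ share the same time interval and a common spatial ball of radius $\rho$; on the resulting sub-cylinder the function $\ell_{2\rho,(x,t)}-\ell_{2\rho,(y,t)}$ is affine in $x$ with sup norm $\le CC_0\rho^\beta$, so the remark bounds the difference of its gradients, $|a^*(2\rho,(x,t))-a^*(2\rho,(y,t))|\le CC_0\rho^{\beta-1}$, and combining with the rate from the first step yields $|a(x,t)-a(y,t)|\le CC_0\rho^{\beta-1}$.

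With this in hand I would read off the two moduli of continuity. For the spatial one, fix $t$, take $x,y\in B_{1/2}$, $z=(x,t)$, $r=2\rho$; from the splitting $u(y,t)-u(x,t)=[u-\ell_{r,z}](y,t)+a^*(r,z)\cdot(y-x)+[\ell_{r,z}-u](x,t)$ (using $\ell_{r,z}(x,t)=b^*(r,z)$ since $z'=x$) together with $|a^*(r,z)-a(x,t)|\,\rho\le CC_0\rho^\beta$ one gets $|u(y,t)-u(x,t)-a(x,t)\cdot(y-x)|\le CC_0\rho^\beta$; dividing by $\rho$ and letting $y\to x$ identifies $\nabla_x u(x,t)=a(x,t)$, and with the previous paragraph this gives $\|u(\cdot,t)\|_{C^\beta(B_{1/2})}\le C(\|u\|_{L^\infty(Q_\infty)}+C_0)$. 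For the time one, given $t_1<t_2$ in $(-1/2,0]$ with $\tau=t_2-t_1$, I would take $r=(2\tau)^{1/\sigma}$ and $z=(x,t_2)$, so that $(x,t_1)\in Q_r^\sigma(z)$; since $\ell_{r,z}$ is constant in $t$ we have $\ell_{r,z}(x,t_1)=\ell_{r,z}(x,t_2)$, hence $|u(x,t_1)-u(x,t_2)|\le 2\|u-\ell_{r,z}\|_{L^\infty(Q_r^\sigma(z))}\le 2C_0 r^\beta=CC_0\tau^{\beta/\sigma}$, with $\beta/\sigma\in(0,1)$ because $\beta<\sigma$. The scales $r\gtrsim 1$ occurring when $|x-y|$ or $\tau$ has unit size, and the endpoint $t=-1/2$, are dealt with by the crude bound $|u(y,t)-u(x,t)-a(x,t)\cdot(y-x)|\le C(\|u\|_{L^\infty(Q_\infty)}+C_0)\le C(\|u\|_{L^\infty(Q_\infty)}+C_0)\rho^\beta$ and by continuity of $u$, respectively; this yields \eqref{holderbound}.

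I do not expect a serious obstacle here: the whole content is the dyadic iteration of the first step, and everything else is bookkeeping. The two points requiring a little care are keeping the parabolic scaling (spatial radius $r$, time length $r^\sigma$) consistent throughout — in particular in the overlap argument giving the Hölder continuity of $a$ — and the observation, mildly surprising at first sight, that the regularity in time comes essentially for free: because the fitting functions $\ell_{r,z}$ are taken constant in $t$, the single family of spatial-scale estimates \eqref{allrz} already encodes the $C^{\beta/\sigma}$ modulus in time, with no separate iteration required.
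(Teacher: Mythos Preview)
Your argument is correct and follows essentially the same Campanato--type iteration as the paper's proof: compare the least-squares planes at consecutive dyadic scales via \eqref{allrz}, sum the geometric series (using $\beta>1$) to obtain limits $a(z),b(z)$ with the rates $|a^*(r,z)-a(z)|\le CC_0 r^{\beta-1}$ and $|b^*(r,z)-b(z)|\le CC_0 r^\beta$, and conclude. The paper's proof is in fact terser than yours---it stops once it has $\|u-a(z)\cdot(x-z')-b(z)\|_{L^\infty(Q_r^\sigma(z))}\le CC_0 r^\beta$ and simply asserts that this yields \eqref{holderbound}; your overlap argument for the $(\beta-1)$-H\"older continuity of $a(\cdot,t)$ and your explicit choice $r=(2\tau)^{1/\sigma}$ for the time modulus are exactly the details one fills in to justify that assertion.
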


\begin{proof}
We may assume $\|u\|_{L^\infty(Q_\infty)}=1$.
Recall the definition of $Q_r^\sigma$ in \eqref{cylinders}.
Note that \eqref{allrz} implies that for all $z\in \overline{Q_{1/2}}$, $r>0$, and $\bar z  \in  Q_r^\sigma(z)$ we have
\[\begin{split}
\bigl| \ell_{2r,z} (\bar z) - \ell_{r,z} (\bar z)\bigr|
&\le  \bigl| u(\bar z) -\ell_{2r,z} (\bar z)\bigr| + \bigl| u(\bar z) -\ell_{r,z} (\bar z)\bigr|\\
&\le C C_0 r^{\beta}.
\end{split}\]
But this happening for every $\bar z \in  Q_r^\sigma(z)$ means
\[ \bigl| a^*(2r,z) - a^*(r,z) \bigr| \le C C_0 r^{\beta-1}\]
and
\[ \bigl| b^*(2r,z) - b^*(r,z) \bigr| \le C C_0 r^{\beta}.\]
In addition since $\|u\|_{L^\infty(Q_\infty)}= 1$ we clearly have that
\begin{equation}\label{boundab}
| a^*(1,z)|\le C \quad \mbox{and}\quad |b^*(r,z)|\le 1\quad  \mbox{for all }r>0.
\end{equation}

Since $\beta>1$ this implies the existence of the limits
\[ a(z) := \lim_{r\searrow 0} a^*(r,z) \quad \mbox{and}\quad b(z):= \lim_{r\searrow 0} b^*(r,z).\]
Moreover,
\[
\begin{split}
\bigl| a(z) - a^*(r,z) \bigr| &\le  \sum_{m=0}^\infty \bigl| a^*(2^{-m}r,z) - a^*(2^{-m-1}r,z) \bigr|\\
&\le  \sum_{m=0}^\infty C C_0 2^{-(\beta-1)m}  r^{\beta-1} \le C(\beta) C_0 r^{\beta-1}.
\end{split}
\]
And similarly
\[
\bigl| b(z) - b^*(r,z) \bigr| \le  C(\beta) C_0 r^{\beta}.
\]
Using  \eqref{boundab} we obtain
\begin{equation}\label{boundab2}
|a(z)|\le C(\beta) (C_0+1)  \quad \mbox{and}\quad |b(z)|\le 1.
\end{equation}

We have thus proven that for all $z\in \overline{Q_{1/2}}$ there are $a(z)\in \R^n$ and $b(z)\in \R$ satisfying the bounds \eqref{boundab2}
such that for all $r>0$
\[
\bigl\| u - a(z)\cdot x -b(z)\bigr\|_{L^\infty(Q_r^\sigma(z))} \le C(\beta) C_0 r^{\beta}
\]
This implies that $u$ is differentiable in the $x$ directions, that $a(z)= D_x u(z)$ and $b(z)= u(z)$,
and that \eqref{holderbound} holds.
\end{proof}

The following standard lemma will be used to show that rescaled functions in the blow up argument also satisfy elliptic equations with the same ellipticity constants.
\begin{lem}\label{lemscalings}
Let $\sigma>1$ and $\I$ be a translation invariant operator with respect to $\mathcal L_0(\sigma)$ with $\I 0=0$. Given $x_0\in \R^n$, $r>0$, $c >0$, and $\ell(x)=a\cdot x+ b$, define
${\rm \tilde I}$ by
\[  {\rm \tilde I} \left(\frac{w(x_0+r\,\cdot)-\ell(x_0+r\,\cdot)}{c}\right) = \frac{r^\sigma}{c}(\I w)(x_0+r\,\cdot\,). \]
Then ${\rm \tilde I}$ is also translation invariant and elliptic with respect to $\mathcal L_0(\sigma)$ (with the same ellipticity constants) with ${\rm \tilde I}\,0=0$.
\end{lem}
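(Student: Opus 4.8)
The plan is to unwind the definition of $\tilde\I$, verify that it is a genuine operator on test functions, and then reduce translation invariance, ellipticity with the same constants, and $\tilde\I 0=0$ to two elementary facts: that $\I$ is insensitive to the addition of an affine function or a constant to its argument, and that the rescaling $L\mapsto\tilde L$ determined by $(\tilde L v)(x)=r^\sigma (Lw_0)(x_0+rx)$ with $w_0(y)=v\bigl(\tfrac{y-x_0}{r}\bigr)$ maps $\mathcal L_0(\sigma)$ bijectively onto itself, preserving the ellipticity constants $\lambda,\Lambda$.

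\emph{Well-definedness and translation invariance.} Given an admissible test function $v$ at a point $x$ (i.e. $C^2$ near $x$ with finite $\omega_{\sigma_0}$-integral), put $w(y):=c\,v\bigl(\tfrac{y-x_0}{r}\bigr)+\ell(y)$. Then $w$ is again admissible near $x_0+rx$ --- here one uses $\sigma>1$, so that $\ell\in L^1(\R^n,\omega_{\sigma_0})$ --- and $\tfrac{w(x_0+r\,\cdot)-\ell(x_0+r\,\cdot)}{c}=v$, so $\tilde\I v(x):=\tfrac{r^\sigma}{c}(\I w)(x_0+rx)$ is unambiguously defined and continuous near $x$ because $\I w$ is continuous near $x_0+rx$; thus $\tilde\I$ is a legitimate ``black box'' on test functions. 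For translation invariance, translating $v$ by $x_1$ corresponds to translating the associated $w$ by $rx_1$ and adding the constant $-r(a\cdot x_1)$; since $\I$ kills additive constants (by ellipticity, as $M_\sigma^+$ and $M_\sigma^-$ annihilate constants) and is translation invariant, one gets $\tilde\I\bigl(v(x_1+\cdot)\bigr)(x)=\tfrac{r^\sigma}{c}(\I w)(x_0+rx+rx_1)=(\tilde\I v)(x_1+x)$.

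\emph{The rescaling preserves $\mathcal L_0(\sigma)$.} For $\sigma\in(0,2)$ a kernel with $\lambda\tfrac{2-\sigma}{|y|^{n+\sigma}}\le K(y)\le\Lambda\tfrac{2-\sigma}{|y|^{n+\sigma}}$ is sent to $\tilde K(y)=r^{n+\sigma}K(ry)$, which satisfies the identical bounds (multiply the bounds for $K$ evaluated at $ry$ by $r^{n+\sigma}$); the inverse of $K\mapsto\tilde K$ is the same construction with $1/r$, so it is a bijection of the class of admissible kernels onto itself. For $\sigma=2$ the operator $a_{ij}\partial_{ij}$ is sent to itself, since the $r^2$ from scaling cancels the $r^{-2}$ produced by the two derivatives. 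In all cases $L\mapsto\tilde L$ is a bijection of $\mathcal L_0(\sigma)$ onto itself preserving $\lambda,\Lambda$. Taking $\sup$ and $\inf$ over $L\in\mathcal L_0(\sigma)$ in the identity $r^\sigma(Lw_0)(x_0+rx)=(\tilde L v)(x)$, with $w_0(y)=v\bigl(\tfrac{y-x_0}{r}\bigr)$, yields the scaling identities $r^\sigma M_\sigma^+\bigl(v(\tfrac{\cdot-x_0}{r})\bigr)(x_0+rx)=M_\sigma^+ v(x)$ and likewise for $M_\sigma^-$.

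\emph{Ellipticity of $\tilde\I$ and $\tilde\I 0=0$.} For admissible test functions $u,v$ at $x$, the associated functions satisfy $w_u-w_v=c\bigl(u(\tfrac{\cdot-x_0}{r})-v(\tfrac{\cdot-x_0}{r})\bigr)$, the affine part $\ell$ cancelling. Feeding this into the ellipticity of $\I$ at the point $x_0+rx$, multiplying by $r^\sigma/c>0$, using the positive homogeneity $M_\sigma^\pm(c\,g)=c\,M_\sigma^\pm g$ for $c>0$, and then the scaling identities, one obtains $M_\sigma^-(u-v)(x)\le\tilde\I u(x)-\tilde\I v(x)\le M_\sigma^+(u-v)(x)$, which is ellipticity of $\tilde\I$ with the same constants. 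Finally, $v\equiv0$ corresponds to $w=\ell$, so $\tilde\I 0(x)=\tfrac{r^\sigma}{c}(\I\ell)(x_0+rx)$; since affine functions are annihilated by every $L\in\mathcal L_0(\sigma)$ we have $M_\sigma^+\ell\equiv M_\sigma^-\ell\equiv0$, and together with $\I 0=0$ the ellipticity inequality forces $\I\ell\equiv0$, hence $\tilde\I 0\equiv0$. The whole argument is essentially bookkeeping; the only points needing a little care are maintaining the $\omega_{\sigma_0}$-integrability of the auxiliary function $w$ (precisely why $\sigma>1$ is assumed, so that $\ell$ is itself an admissible test function) and correctly tracking how the extremal operators rescale --- neither of which presents a genuine obstacle.
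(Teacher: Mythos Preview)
Your proof is correct and follows essentially the same route as the paper's: both unwind $\tilde\I$ into $\frac{r^\sigma}{c}\I\bigl(c\,u(\tfrac{\cdot-x_0}{r})+\ell\bigr)(x_0+rx)$, use $M_\sigma^\pm\ell=0$ to dispose of the affine part, and deduce ellipticity from the translation invariance, positive homogeneity, and order-$\sigma$ scaling of $M_\sigma^\pm$. The paper simplifies slightly earlier by invoking $\I(w+\ell)=\I w$ at the outset (so $\ell$ disappears from the formula before checking the three properties), whereas you carry $\ell$ through and let it cancel in the difference; you are also more explicit about the kernel bijection $K\mapsto r^{n+\sigma}K(r\,\cdot)$ and about why $\sigma>1$ is needed for $\ell$ to be an admissible test function, points the paper leaves implicit.
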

\begin{proof}
We have
\[
\begin{split}
{\rm \tilde I}\, u(x) &=  \frac{r^\sigma}{c} \I\left( c u\left(\frac{\,\cdot\,-x_0}{r}\right)+\ell(\cdot)\right)(x_0 + rx)\\
&=  \frac{r^\sigma}{c} \I\left( c u\left(\frac{\,\cdot\,-x_0}{r}\right)\right)(x_0 + rx),
\end{split}
\]
where we have used $M_\sigma^+ \ell = M_\sigma^- \ell = 0$.

We clearly see from the second expression that $\tilde \I$ is translation invariant.

Also,
\[ {\rm \tilde I}\, 0 =  \frac{r^\sigma}{c} \I 0  = 0.\]

Moreover,
\[ \begin{split}
\bigl\{{\rm \tilde I}\, u-{\rm \tilde I}\, v\bigr\}(x) &=  \frac{r^\sigma}{c}\left\{ \I\left( c u\left(\frac{\,\cdot\,-x_0}{r}\right)+\ell\right)-\I\left( c v\left(\frac{\,\cdot\,-x_0}{r}\right)+\ell\right)\right\}(x_0 + rx)\\
&\le \frac{r^\sigma}{c}M_\sigma^+\left( c u\left(\frac{\,\cdot\,-x_0}{r}\right)- c v\left(\frac{\,\cdot\,-x_0}{r}\right)\right)(x_0 + rx)\\
& = M_\sigma^+(u-v)(x),
\end{split}\]
since $\I$ is elliptic with respect to $\mathcal L_0(\sigma)$ and $M_\sigma^+$ is translation invariant, positively homogeneous of degree one, and scale invariant of order $\sigma$.
Similarly,
\[ M_\sigma^-(u-v)(x)\le \bigl\{{\rm \tilde I}\, u-{\rm \tilde I}\, v\bigr\}(x).\]

\end{proof}

The following proposition immediately implies Theorem \ref{thm1}. However the statement of the proposition is better suited for a proof  by contradiction.
\begin{prop}\label{prop1}
Let $\sigma_0\in(0,2)$ and $\sigma\in{[\sigma_0,2]}$.
Let $u\in L^{\infty}\bigl(\R^n \times {(-1,0)}\bigr)$ be a viscosity solution of $u_t - \I u =f$ in $B_1\times(-1,0]$, where $\I$ is a translation
invariant elliptic operator with respect to the class $\mathcal L_0(\sigma)$.
Let $\alpha=\alpha(\sigma_0)$ be given by Theorem \ref{holder}.

Then, for all $\beta<\min\{\sigma_0,1+\alpha\}$, $u(\,\cdot\,,t)$
belongs to $C^\beta\left(\overline {B_{1/2}}\right)$ for all $t\in[-1/2,0]$, and $u(x,\,\cdot\,)$ belongs to $C^{\beta/{\sigma}}\left([-1/2,0]\right)$ for all $x\in B_{1/2}$. Moreover, the
following estimate holds
\[ \sup_{t\in[-1/2,0]} \bigl\|u(\,\cdot\,,t)\bigr\|_{C^\beta(B_{1/2})}+ \sup_{x\in B_{1/2}} \bigl\|u(x,\,\cdot\,)\bigr\|_{C^{\beta/{\sigma'}}\left([-1/2,0]\right)} \le C C_0\]
where
\[ C_0 =  \|u\|_{L^\infty\left(\R^n\times(-1,0)\right)} + \|f\|_{L^\infty\left(B_1\times(-1,0)\right)} \]
and  $C$ depends only on $\sigma_0$, $\beta$, ellipticity constants, and dimension.
\end{prop}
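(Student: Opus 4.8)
The plan is to argue by contradiction and compactness, producing in the limit a global solution to which the Liouville theorem (Theorem~\ref{liouville}) applies. Suppose the estimate fails. Then there exist sequences $\sigma_m\in[\sigma_0,2]$, operators $\I_m$ translation invariant and elliptic with respect to $\mathcal L_0(\sigma_m)$ with $\I_m 0=0$, functions $u_m\in L^\infty(\R^n\times(-1,0))$ solving $\partial_t u_m-\I_m u_m=f_m$ in $B_1\times(-1,0]$ with $\|u_m\|_{L^\infty(\R^n\times(-1,0))}+\|f_m\|_{L^\infty(B_1\times(-1,0))}\le 1$, yet the left-hand side of the desired estimate is unbounded. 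By Lemma~\ref{lemaux}, if the conclusion fails it must fail at the level of the quantity monitored there: the ``distance to the least-squares affine fit'' condition \eqref{allrz} must be violated. So I would set, for each $m$,
\[
\theta_m(r):=\sup_{m'\ge ?}\ \sup_{z\in Q_{1/2}}\ r^{-\beta}\,\bigl\|u_{m'}-\ell_{r,z}[u_{m'}]\bigr\|_{L^\infty(Q_r^\sigma(z))},
\]
and via the standard monotone-in-$r$ trick pick radii $r_m\searrow 0$ and centers $z_m\in\overline{Q_{1/2}}$ nearly realizing the supremum, normalizing so that the rescaled functions
\[
v_m(x,t):=\frac{u_m(z_m'+r_m x,\ z_{m,n+1}+r_m^{\sigma_m}t)-\ell_{r_m,z_m}[u_m](z_m'+r_m x,\,\cdot\,)}{r_m^{\beta}\,\Theta_m}
\]
(with $\Theta_m\to\infty$ the quantity measuring the failure) satisfy $\|v_m\|_{L^\infty(Q_1^{\sigma_m})}=1$ and, crucially, the \emph{growth control} $\|v_m\|_{L^\infty(Q_R^{\sigma_m})}\le C R^\beta$ for all $1\le R\le 1/r_m$, by the maximality in the choice of $r_m$ together with $\beta<\sigma_0\le\sigma_m$ so that the $L^1(\omega_{\sigma_0})$ tails stay controlled. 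The normalization also forces $\int_{Q_1^{\sigma_m}}v_m\,x_i=\int_{Q_1^{\sigma_m}}v_m=0$, i.e.\ $0$ is the least-squares fit of $v_m$ on $Q_1^{\sigma_m}$.

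Next, each $v_m$ solves $\partial_t v_m-\tilde\I_m v_m=\tilde f_m$ in $Q_{1/r_m}^{\sigma_m}$, where $\tilde\I_m$ is the rescaled operator of Lemma~\ref{lemscalings}, still elliptic with respect to $\mathcal L_0(\sigma_m)$ with the same constants and $\tilde\I_m 0=0$, and $\tilde f_m=r_m^{\sigma_m-\beta}\Theta_m^{-1}f_m(\cdots)\to 0$ uniformly on compact sets since $\sigma_m-\beta\ge\sigma_0-\beta>0$ and $\Theta_m\to\infty$. Now apply the interior $C^\alpha$ estimate of Theorem~\ref{holder} (in its parabolic form) at each fixed scale $R$: the hypotheses there are exactly the growth control in $L^1(\omega_{\sigma_0})$ and the two extremal inequalities $\partial_t v_m\lessgtr M^\pm_{\sigma_m}v_m$, both in hand; hence $v_m$ is bounded in $C^\alpha$ in space, uniformly on $Q_R^{\sigma_m}$, for every $R$. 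Standard parabolic arguments also give equicontinuity in time. By Arzel\`a--Ascoli and a diagonal extraction, $v_m\to v$ locally uniformly on $\R^n\times(-\infty,0]$, and by Lemma~\ref{lemweak} (after passing to a further subsequence) $\sigma_m\to\sigma$ and $\tilde\I_m\rightharpoonup\I_\infty$ weakly with respect to $\omega_{\sigma_0}$, with $\I_\infty$ translation invariant, elliptic with respect to $\mathcal L_0(\sigma)$, and $\I_\infty 0=0$. The growth bound passes to the limit, so $\|v\|_{L^\infty(Q_R^\sigma)}\le CR^\beta$ for all $R\ge 1$, and—using that the $L^1(\omega_{\sigma_0})$-tails converge because $\beta<\sigma_0$—Lemma~\ref{lem5parab} yields that $v$ is a global viscosity solution of $\partial_t v=\I_\infty v$ in $\R^n\times(-\infty,0]$.

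By Theorem~\ref{liouville}, since $\beta<\min\{\sigma_0,1+\alpha\}$, $v$ is either constant (if $\beta<1$) or an affine function of $x$ alone (if $\beta\ge1$); in either case $v$ is a constant-in-$t$ affine function of $x$. But the least-squares normalization passes to the limit: $0$ is the least-squares affine fit of $v$ on $Q_1^\sigma$, which forces $v\equiv 0$ on $Q_1^\sigma$. This contradicts $\|v\|_{L^\infty(Q_1^\sigma)}=\lim\|v_m\|_{L^\infty(Q_1^{\sigma_m})}=1$. Hence condition \eqref{allrz} holds for $u$ with a universal $C_0$ (after the usual reduction to $\|u\|_{L^\infty}+\|f\|_{L^\infty}\le1$ by scaling and the fact that $\I$ for $f\ne0$ can be absorbed into the right-hand side via $\I 0=0$), and Lemma~\ref{lemaux} delivers the stated $C^\beta$-in-space and $C^{\beta/\sigma}$-in-time bounds. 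Finally, Theorem~\ref{thm1} follows because for $\beta$ in the range $[\sigma_0,\min\{\sigma,1+\alpha\})$ one applies this proposition with $\sigma_0$ replaced by a slightly smaller value (or simply notes $\beta<\sigma$ and runs the same argument with the weight $\omega_{\sigma_0}$ for that smaller exponent).

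The main obstacle I expect is the simultaneous handling of the \emph{varying order} $\sigma_m$ together with the nonlocal tails: one must verify that the growth control at scale $r_m^{-1}$ really does bound $\sup_t\|v_m(\cdot,t)\|_{L^1(\omega_{\sigma_0})}$ uniformly (this is where $\beta<\sigma_0$, not merely $\beta<\sigma_m$, is essential), that this $L^1$ bound is what lets Theorem~\ref{holder} be applied at \emph{every} dyadic scale with a constant independent of $m$, and that the tail contributions vanish in the limit so that Lemma~\ref{lem5parab} applies on $\R^n$ rather than on a fixed ball. The second delicate point is extracting equicontinuity in time for the $v_m$ uniformly in $m$ and in the scale, which is needed for the Arzel\`a--Ascoli step; this should follow from the parabolic $C^{\alpha/\sigma}$-in-time part of \cite{ChD} applied after rescaling, or alternatively from a barrier argument using the extremal operators, but it requires care to keep the constants uniform as $\sigma_m$ ranges over $[\sigma_0,2]$.
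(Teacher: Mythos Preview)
Your strategy matches the paper's exactly: contradiction, a monotone quantity $\Theta(r)$ built from the failure of \eqref{allrz}, selection of near-extremal $(k_m,r_m,z_m)$, blow-up, compactness via Theorem~\ref{holder} and Lemmas~\ref{lemweak}--\ref{lem5parab}, and contradiction with the Liouville theorem. The paper additionally splits into the cases $\sigma_0\le 1$ (where $\beta<1$, one subtracts only the constant $u_k(z)$, and Lemma~\ref{lemaux} is not invoked) and $\sigma_0>1$ (affine fit), but this is organizational.

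There is, however, one genuine gap in your sketch. In the affine-subtraction case the growth control $\|v_m\|_{L^\infty(Q_R^{\sigma_m})}\le CR^\beta$ does \emph{not} follow ``by the maximality in the choice of $r_m$''. The quantity $\Theta$ only controls $\|u_{k}-\ell_{k,r',z}\|_{L^\infty(Q_{r'}^{\sigma_k}(z))}$ for each scale $r'$, whereas $v_m$ subtracts the fixed plane $\ell_{k_m,r_m,z_m}$ and you need a bound on the larger cylinder $Q_{Rr_m}^{\sigma_m}(z_m)$. The missing step is to control $\|\ell_{k_m,Rr_m,z_m}-\ell_{k_m,r_m,z_m}\|_{L^\infty(Q_{Rr_m}^{\sigma_m}(z_m))}$. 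The paper does this by first observing, from the definition of $\Theta$, that for $r'\ge r$ one has $|\ell_{k,2r',z}-\ell_{k,r',z}|\le C\Theta(r)(r')^\beta$ on $Q_{r'}^{\sigma_k}(z)$, hence $|a^*(k,2r',z)-a^*(k,r',z)|\le C\Theta(r)(r')^{\beta-1}$ and similarly for $b^*$; then one telescopes over the dyadic scales $2^j r_m$, $j=0,\dots,N-1$ with $2^N=R$, and sums the resulting geometric series (ratio $2^{\beta-1}>1$) to obtain $r_m|a^*(k_m,Rr_m,z_m)-a^*(k_m,r_m,z_m)|\le C\Theta(r_m)r_m^\beta R^{\beta-1}$. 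Only after this does the triangle inequality give the claimed growth bound. Your identification of the time-equicontinuity issue is apt; the paper handles it implicitly via the time-regularity part of \cite{ChD}.
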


\begin{proof}
For $r\in(0,+\infty]$, we denote
\[ Q_r = B_r\times(-r,0].\]
Note that we may consider $u$ to be defined in the whole $Q_\infty$ and not only in $\R^n\times(-1,0]$ by
extending $u$ by zero. This is only by notational convenience and there is no difference in doing it since the equation is local in time and its domain will still be $Q_1$.

The proof is by contradiction. Suppose that the statement is false, i.e., there are sequences of functions $u_k$, $f_k$, operators $\I_k$, and orders $\sigma_k\in [\sigma_0,2]$ such that
\begin{itemize}
\item $\partial_t u_k- \I_k u_k=f_k$ in $B_1\times(-1,0]$
\item $\I_k$ is translation invariant and elliptic with respect to $\mathcal L_0(\sigma_k)$
\item $\|u_k\|_{L^\infty(Q_{\infty})}+ \|f_k\|_{L^\infty(Q_1)}=1$ (by scaling to make $C_0=1$)
\end{itemize}
but
\begin{equation} \label{1}
\sup_{t\in[-1/2,0]} \bigl\|u_k(\,\cdot\,,t)\bigr\|_{C^\beta(B_{1/2})}+ \sup_{x\in B_{1/2}} \bigl\|u_k(x,\,\cdot\,)\bigr\|_{C^{\beta/{\sigma_k}}\left([-1/2,0]\right)} \nearrow \infty.
\end{equation}
We split the proof in two cases: $\sigma_0 \le 1$ and $\sigma_0 >1$.

\vspace{3pt}
{\it Case $\sigma_0\le 1$.}
Since in this case we have $\beta<1$,   \eqref{1} is equivalent to
\begin{equation}
\sup_k \sup_{r>0} \sup_{z\in Q_{1/2}} r^{-\beta}\bigr\| u_k - u_k(z) \bigr\|_{L^\infty(Q_r^{\sigma_k}(z))} = \infty.
\end{equation}

Define the quantity
\[ \Theta(r) :=  \sup_k \sup_{r' \ge r}  \sup_ {z\in Q_{1/2}}  (r')^{-\beta} \bigl\|u_k-u_k(z)\bigr\|_{L^\infty\left(Q_{r'}^{\sigma_k}(z)\right)},\]
which is monotone in $r$.
Note that we have $\Theta(r)<\infty$ for $r>0$ and $\Theta(r)\nearrow \infty$ as $r\searrow0$. Clearly, there are sequences $r_m\searrow 0$, and $k_m$, and $z_{m}\in \overline{Q_{1/2}}$ for which
\begin{equation}\label{nondeg2}
(r_m)^{-\beta}\left\|{u_{k_m}-u_{k_m}(z_m)}\right\|_{L^\infty(Q_{r_m}^{\sigma_{k_m}}(z_{m}))} \ge \Theta(r_{m})/2.
\end{equation}

In this situation, let us denote $z_m= (x_m, t_m)$, $\sigma_m=\sigma_{k_m}$, and consider the blow up sequence
\[ v_m(x,t) = \frac{u_{k_m}(x_{m} +r_m x, t_m + (r_m)^{\sigma_m}t)-u_{k_m}(z_m)}{(r_m)^{\beta}\Theta(r_m)}.\]
Note that we will have, for all $m\ge 1$,
\begin{equation}\label{2}
v_m(0) =0  \quad \mbox{and}\quad  \|v_m\|_{L^\infty(Q_{1})}\ge 1/2.
\end{equation}
The last inequality is a consequence of \eqref{nondeg2}

For all $R\ge 1$,   $v_m$ satisfies the growth control
\begin{equation}\label{gctrl}
\begin{split}
\|v_{m}\|_{L^\infty(Q_R^{\sigma_m})} &= \frac{1}{(r_m)^{\beta} \Theta({r_m})} \bigl\|u_{k_m}- u_{k_m}(z_m)\bigr\|_{L^\infty\left(Q_{r_m R}^{\sigma_m}(z_{m})\right)}
\\
&= \frac{R^{\beta}}{\Theta({r_m}) (r_m R)^{\beta}}  \bigl\|u_{k_m}- u_{k_m}(z_m)\bigr\|_{L^\infty\left(Q_{r_m R}^{\sigma_m}(z_{m})\right)}
\\
&\le \frac{R^{\beta}\Theta({r_m}R)}{\Theta({r_m})}
\\
&\le   R^{\beta},
\end{split}
\end{equation}
where we have used the definition of $\Theta(r)$ and its montonicity.

For all fixed  $\rho \le (1-2^{-\sigma_0})/ r_m \nearrow \infty$, then $v_{m}$ solves
\begin{equation}\label{eqvm}
\bigl(\partial_t v_m - {\rm \tilde I}_m v_{m}\bigr)(x,t) = \frac{(r_m)^{\sigma_m}}{(r_m)^{\beta}\Theta(r_m)} f(x_m +r \,\cdot\,, t_m + r^{\sigma_m} t )\quad \mbox{in }B_{\rho} \times (-\rho^{\sigma_0}, 0]\,,
\end{equation}
where ${\rm \tilde I}_m$ is the operator ${\rm  I}_{k_m}$  appropriately rescaled.
More precisely,  given an elliptic test function $w:\R^n\to \R$ it is
\[ {\rm \tilde I}_m \left( \frac{w(x_m +r\,\cdot) - u_{k_m}(z_m)}{(r_m)^{\beta}\Theta(r_m)} \right) = \frac{(r_m)^{\sigma_m}}{(r_m)^{\beta}\Theta(r_m)} \bigl(\I_{k_m} w\bigr)(\,\cdot\,). \]
By the proof of Lemma \ref{lemscalings}, $\tilde \I_m$ is elliptic with respect to $\mathcal L_0(\sigma_m)$ with the same ellipticity constants.

Note that since $\beta<\sigma_0\le\sigma_m$ and $\Theta(r_m)\nearrow \infty$, the right hand sides of \eqref{eqvm} converge uniformly to $0$, and in particular they are uniformly bounded.
Then, using the $C^\alpha$ estimate in Theorem \ref{holder} (rescaled) in every cylinder $B_{\rho} \times (-\rho^{\sigma_0}, 0]$, $\rho>1$,  we obtain a subsequence $v_{m}$ converging locally uniformly
in all of $\R^n\times(-\infty,0]$ to some function $v$. Note that, although these $C^\alpha$ estimates for $v_m$  clearly depend on $\rho$, the important fact is that they are independent of $m$. This is enough  to
obtain local uniform convergence by the Arzel\`a-Ascoli Theorem and the typical diagonal argument. Moreover, since all the $v_{m}$'s satisfy the  growth control
\[ \|v_{m}\|_{L^\infty(Q_R^{\sigma_0})} \le  \|v_{m}\|_{L^\infty(Q_R^{\sigma_m})} \le R^\beta \]
and  $\beta< \sigma_0$, by dominated convergence we obtain that
\[\sup_{t\in [-\rho^{\sigma_0}, 0]} \int \bigl|v_m -v\bigr|(x,t) \omega_{\sigma_0}(x)\,dx \rightarrow 0.\]

In addition, by Lemma \ref{lemweak} there is a subsequence of ${\rm \tilde I}_m$
which converges weakly to some operator ${\rm \tilde I}$, translation invariant and elliptic with respect to $\mathcal L_0(\sigma)$ for some
$\sigma\in[\sigma_0,2]$ in every ball $B_R$. Hence, it follows from Lemma \ref{lem5parab} that $v$ satisfies
\[
v_t - {\rm \tilde I} v =0
\quad \mbox{in all of }\R^n\times(-\infty,0].
\]

On the other hand, by local uniform convergence, passing to the limit the growth controls \eqref{gctrl} for each $v_m$ we obtain that
$\|v\|_{L^\infty(Q_R^{\sigma})} \le   R^{\beta}$. Hence, by Theorem \ref{liouville}, $v$ must be constant. But passing \eqref{2} to the limit we obtain
$v(0)=0$ and $\|v\|_{L^\infty(Q_{1})}\ge 1/2$ and hence $v$ is not constant; a contradiction.

\vspace{3pt}
{\it Case $\sigma_0 >1$.}
In this case it is enough to consider $1<\beta<\min\{\sigma_0,1+\alpha\}$.
By Lemma \ref{lemaux}, \eqref{1} implies
\begin{equation}\label{111}
\sup_k \sup_{r>0} \sup_{z\in Q_{1/2}} r^{-\beta}\bigr\| u_k - \ell_{k,r,z} \bigr\|_{L^\infty(Q_r^{\sigma_k}(z))} = \infty,
\end{equation}
where, as in Lemma \ref{lemaux},  $\ell_{k,r,z}$ is the affine function of the variables $x$ only which best fits $u_k$ in $Q_r^{\sigma_k}(z)$ by least squares. Namely,
\[ \ell_{k,r,z}(x) = a^*(k,r,z) \cdot (x-z') +b^*(k,r,z)\]
for
\[ \bigl(a^*(k,r,z),b^*(k,r,z)\bigr) = {\rm arg\,min}_{(a,b)\in\R^n\times \R} \int_{Q_r^{\sigma_k}(z)} \bigl(u_k(x,t) - a\cdot (x- z') +b \bigr)^2\,dx\,dt, \]
where $z'$ denotes the first $n$ components of  $z$.

Now we define the quantity
\[
\Theta(r) :=  \sup_k \sup_{r'\ge r}  \sup_ {z\in Q_{1/2}}  (r')^{-\beta}  \bigl\|u_k-\ell_{k,r',z}\bigr\|_{L^\infty\left(Q_{r'}^{\sigma_k}(z)\right)},
\]
which is monotone in $r$.
Notice that we have $\Theta(r)<\infty$ for $r>0$ and $\Theta(r)\nearrow \infty$ as $r\searrow0$. Again, there are sequences $r_m\searrow 0$, and $k_m$, and $z_{m}\in \overline{Q_{1/2}}$ for which one (or more) of the following 
three possibilities happen
\begin{equation}\label{nondeg2b}
(r_m)^{-\beta}\left\|{u_{k_m}-\ell_{k_m,r_m,z_m}}\right\|_{L^\infty(Q_{r_m}^{\sigma_{k_m}}((z_{m}))} \ge \Theta(r_{m})/2
\end{equation}

We then denote $z_m= (x_m, t_m)$, $\sigma_m=\sigma_{k_m}$, $\ell_m = \ell_{k_m, r_m,z_m}$, and consider the blow up sequence
\[ v_m(x,t) = \left(\frac{u_{k_m}-\ell_m}{(r_m)^{\beta}\Theta(r_m)} \right)\bigr(x_{m} +r_m x, t_m +(r_m)^{\sigma_m} t\bigl).\]
Note that we will have, for all $m\ge 1$,
\begin{equation}\label{4}
\int_{Q_1} v_m \,dx\,dt =0 ,\quad  \int_{Q_1} v_m x_i \,dx\,dt = 0, \ 1\le i\le n,
\end{equation}
which are the optimality conditions of least squares. 

Translating  \eqref{nondeg2b} to $v_m$ we obtain that 
\begin{equation}\label{nondeg2c}
\left\|{v_m}\right\|_{L^\infty(Q_1)} \ge 1/2 
\end{equation}

Next we prove the growth control
\[
\|v_{m}\|_{L^\infty(Q_R^{\sigma_m})}  \le   CR^{\beta},\quad \mbox{for all }R\ge1.
\]
Indeed, for all $k$, $z\in Q_{1/2}$ and  $r' \ge r$ we have, by definition of $\Theta(r)$, 
By definition of $\Theta$, for all $z\in \overline{Q_{1/2}}$, $r>0$, and $\bar z  \in  Q_{r'}^{\sigma_k}(z)$ we have
\[\begin{split}
\bigl| \ell_{k,2r',z} (\bar z) - \ell_{k,r',z} (\bar z)\bigr|
&\le  \bigl| u(\bar z) -\ell_{k,2r',z} (\bar z)\bigr| + \bigl| u(\bar z) -\ell_{r,z} (\bar z)\bigr|\\
&\le C \Theta(r) (r')^{\beta}.
\end{split}\]

This happening for all $\bar z  \in  Q_{r'}^{\sigma_k}(z)$ implies
\[  \frac{r'\bigl|a^*(k,2r',z)-a^*(k,r',z)\bigr|} { (r')^\beta \Theta(r)}\le C \quad \mbox{ and }\quad \frac{ \bigl|b^*(k,2r',z)-b^*(k,r',z)\bigr|}{ (r')^\beta \Theta(r)}  \le C.\]
And thus, setting  $R= 2^N$, where $N\ge1$ is an integer, we have
\[ 
\begin{split}
 \frac{r\bigl|a^*(k,Rr,z)-a^*(k,r,z)\bigr|} { r^\beta \Theta(r)}& \le C \sum_{j=0}^{N-1}  2^{j(\beta-1)}\frac{2^jr\bigl|a^*(k,2^{j+1}r,z)-a^*(k,2^jr,z)\bigr|} { (2^jr)^\beta \Theta(r)}
\\&
\le C 2^{(\beta-1)N} = CR^{\beta-1}.
\end{split}\]
Similarly, 
\[ 
 \frac{\bigl|b^*(k,Rr,z)-b^*(k,r,z)\bigr|} { r^\beta \Theta(r)}\le CR^{\beta}.
\]

Therefore, for all $R\ge 1$
\begin{equation}\label{gctrl}
\begin{split}
\|v_{m}\|_{L^\infty(Q_R^{\sigma_m})} &= \frac{1}{(r_m)^{\beta} \Theta({r_m})} \bigl\|u_{k_m}-\ell_{k_m, r_m,z_m}\bigr\|_{L^\infty\left(Q_{Rr_m}^{\sigma_m}(z_{m})\right)}
\\
&\le  \frac{1}{(r_m)^{\beta} \Theta({r_m})} \bigl\|u_{k_m}-\ell_{k_m, Rr_m ,z_m}\bigr\|_{L^\infty\left(Q_{Rr_m }^{\sigma_m}(z_{m})\right)} + \\
& \hspace{10mm}+  \frac{1}{(r_m)^{\beta} \Theta({r_m})} \bigl\|\ell_{k_m, Rr_m ,z_m}- \ell_{k_m, r_m ,z_m}\bigr\|_{L^\infty\left(Q_{Rr_m}^{\sigma_m}(z_{m})\right)} 
\\
&\le \frac{R^{\beta}\Theta(R{r_m})}{\Theta({r_m})} + CR^{\beta}
\\
&\le  C R^{\beta},
\end{split}
\end{equation}

Next, for all fixed  $\rho \le (1-2^{-\sigma_0})/ r_m \nearrow \infty$, $v_{m}$ solves
\begin{equation}\label{eqvm2}
\bigl(\partial_t v_m - {\rm \tilde I}_m v_{m}\bigr)(x,t) = \frac{(r_m)^{\sigma_m}}{(r_m)^{\beta}\Theta(r_m)} f(x_m +r \,\cdot\,, t_m + (r_m)^{\sigma_m} t )\quad \mbox{in }B_{\rho} \times (-\rho^{\sigma_0}, 0]\,,
\end{equation}
where ${\rm \tilde I}_m$ is defined by
\[ {\rm \tilde I}_m \left( \frac{w(x_m +r_m\,\cdot) -  \ell_m(x_m +r_m\,\cdot)}{(r_m)^{\beta}\Theta(r_m)} \right) = \frac{(r_m)^{\sigma_m}}{(r_m)^{\beta}\Theta(r_m)} \bigl(\I_{k_m} w\bigr)(\,\cdot\,). \]
By Lemma \ref{lemscalings} $\tilde \I_m$ is elliptic with respect to $\mathcal L_0(\sigma_m)$,.

As a consequence, repeating the reasoning in the first part of the proof, a subsequence of $v_m$ converges locally uniformly in $\R^n\times(-\infty,0]$ to a function $v$ which satisfies
$v_t = {\rm \tilde I} v$ for some  ${\rm \tilde I}$ in translation invariant and elliptic  with respect to $\mathcal L_0(\sigma)$ with $\sigma\in[\sigma_0,2]$.
Hence, $v$ satisfies the limit growth control of the $v_m$'s, and thus  by Theorem \ref{liouville}, $v= a\cdot x +b$. But passing \eqref{4} and \eqref{nondeg2c} to the limit we reach a contradiction.
\end{proof}

We finally give the
\begin{proof}[Proof of Theorem \ref{thm1}]
Let $\delta=\epsilon/4$ and divide $[\sigma_0, 2]$ into  $N= \lceil (2-\sigma_0)/\delta \rceil$ intervals $[\sigma_j,\sigma_{j+1}]$ , $j=0,1,2,\dots, N$, where $\sigma_N = 2$ and $ 0\le \sigma_{j+1}-\sigma_j \le \delta$.
For each of the intervals $[\sigma_j,\sigma_{j+1}]$ we use Proposition \ref{prop1}, with $\sigma_0$ replaced by $\sigma_j$.
We obtain that  the estimate of the Proposition holds for  $\beta =  \min\{\sigma_j, 1+\alpha\}-\delta$ with a constant $C_j$ that depends only $\delta$, $\sigma_j$, ellipticity constants, and dimension.
In particular, given $\sigma \in [\sigma_ 0,2]$ the estimate of the Theorem holds for all $\beta \le \min\{\sigma, 1+\alpha\ -2\delta\}$ with constant $C=\max C_j$.
\end{proof}

\section*{Acknowledgements}
The author is indebted to Dennis Kriventsov for fruitful discussions and suggestions on this paper.
The author is  also indebted to Xavier Cabr\'e, Xavier Ros-Oton, and Luis Silvestre for their enriching comments on a previous
version of this manuscript.

\end{document}